\newtheorem{theorem}{Theorem}[section]
\newtheorem*{theorem*}{Theorem}
\newtheorem{lemma}[theorem]{Lemma}
\newtheorem{conjecture}[theorem]{Conjecture}
\newtheorem{corollary}[theorem]{Corollary}
\newtheorem{prop}[theorem]{Proposition}
\theoremstyle{definition}
\newcommand{\fa}{\forall}
\newcommand{\lt}{\left}
\newcommand{\rt}{\right}
\newcommand{\N}{{\mathbb N}}
\newcommand{\R}{{\mathbb R}}
\newcommand{\Rarr}{\Rightarrow}
\newcommand{\Len}{\mathop{\rm Len\mskip 1mu}}
\newcommand{\im}{\mathop{\rm Im}\nolimits}
\newcommand{\Le}{\leqslant}
\newcommand{\Ge}{\geqslant}
\newcommand{\mto}{\mapsto}
\newcommand{\vphi}{\varphi}
\newcommand{\veps}{\varepsilon}
\newcommand{\lims}{\limits}
\newcommand{\Inf}{\infty}
\newcommand{\eq}{\equiv}
\title{On the Complexity of Horizontal Unitracks.}
\author{Ivan Molodyk}
\date{}
\begin{document}

\maketitle

\begin{abstract}
    This paper concerns the geometry of bicycle tracks. We model bicycle as an oriented segment of a fixed length that is moving in the Euclidean plane so that the trajectory of the rear point is tangent to the segment at all times. The trajectories of front and back points of the segment are called bicycle tracks, and one asks if it is possible that the front track is contained in the rear track (other than when they are straight lines). Such curves are called unitracks or unicycle tracks. In 2002 D.Finn proposed a construction of unitracks that are obtained as a union of a sequence of curves. Numerical evidence suggested that these curves behave expansively and that various numerical characteristics of the curves grow quickly in the sequence. In this paper we prove that the curves that form a unitrack in Finn's construction cannot remain graphs of functions, unless they are straight lines. We conclude that the horizontal amplitude of the curves has a linear growth rate between $1$ and $2$.
\end{abstract}

\large 

\section{Introduction}

The bicycle tracks geometry is a rich and fascinating subject, and it attracted much attention lately. 

One studies a simple model of a bicycle as an oriented segment of fixed length moving in the Euclidean plane. Let the front and the rear points of the segment represent the touching points of the front and back wheels of the bicycle with the ground. Unlike the front wheel, the rear wheel of the bike is fixed on the frame, so its trajectory is tangent to the moving segment at all times. This is a non-holonomic constraint on the motion that defines a completely non-integrable distribution on the 3-dimensional configuration space of directed segments of a fixed length.
The same model describes the motion of hatchet planimeter, see \cite{Fo,FLT}.

To put things into perspective, we briefly mention the main directions of study of this bicycle model. 

The non-holonomic ``bicycle" constraint can be expressed as a first order differential equation that, in appropriate coordinates, is a Riccati equation. The monodromy of this equation is a M\"obius transformation depending on the front track. If the front track is a closed curve, the monodromy map assigns the final position of the bicycle to its initial position. The bicycle path is closed if the monodromy has a fixed point. An old conjecture, due to an engineer Menzin, is that if the bicycle segment is unit, the front track is a closed convex curve that bounds area greater than $\pi$, then the monodromy is hyperbolic, that is, has two fixed points. See \cite{LT} for a proof. Another paper on this topic is \cite{BHT}. Also see \cite{HPZ} for a version in the spherical and hyperbolic geometries. 

Two front tracks that share the same closed rear track traversed in the opposite directions are said to be in the bicycle correspondence. This correspondence is completely integrable and it is closely related with the planar filament equation, a completely integrable partial differential equation describing evolution of plane curves; see \cite{BLPT,Ta}. 

A question that was explored in \cite{Fi2} is as follows: if the bicycle tracks were two closed curves on the plane -- would we be able to tell what  the direction of motion was? This question relates bicycle dynamics to a different geometrical problem concerning bodies floating in equilibrium. The connection and certain properties of such bodies were studied in  \cite{We}.

One can consider the bicycle motion as a horizontal curve in the configuration space, that is, a curve that is tangent to the distribution describing the non-holonomic constraint. This distribution has a natural metric, and one wants to describe its geodesics. In this metric, the length of the bicycle path is the length of the front track. See \cite{ABLMS,BJT,PT} for results in this direction. 

Still another question is whether one can ride a bicycle in such a way that the tracks of the rear and the front wheels coincide (other than along a straight line)? The solutions to this problem are called unicycle tracks, or unitracks. D. Finn have proposed a construction of a unitrack in \cite{Fi1}. We should also mention a different approach to constructing a unitrack due to S. Wagon \cite{Wa}. However, this paper concerns some of the properties of Finn's construction, briefly explained below.

Consider a smooth curve connecting $(0, 0)$ to $(1, 0)$ on the plane, such that it is infinitely flat with respect to the horizontal direction at the endpoints. We assume that this curve $\gamma_0$ is a segment of the rear track of a bicycle of length $1$. Then we can reconstruct the corresponding segment of the front track --- we call it $\gamma_1$, and it will connect points $(1, 0)$ and $(2, 0)$. Due to the specific conditions of "flatness" at the endpoints, $\gamma_0$ and $\gamma_1$ can be "glued" together smoothly at the point $(1, 0)$, therefore we can assume $\gamma_1$ is the continuation of the rear track that started as $\gamma_0$. In the same way we can proceed to construct curves $\gamma_2, \gamma_3$, and so on. The union of this sequence of curves forms a smooth rear track that contains the front track as a subset. \\

\begin{figure}[hb!]\label{pic:unitrack}
    \centering
    \includegraphics[height=6.5cm]{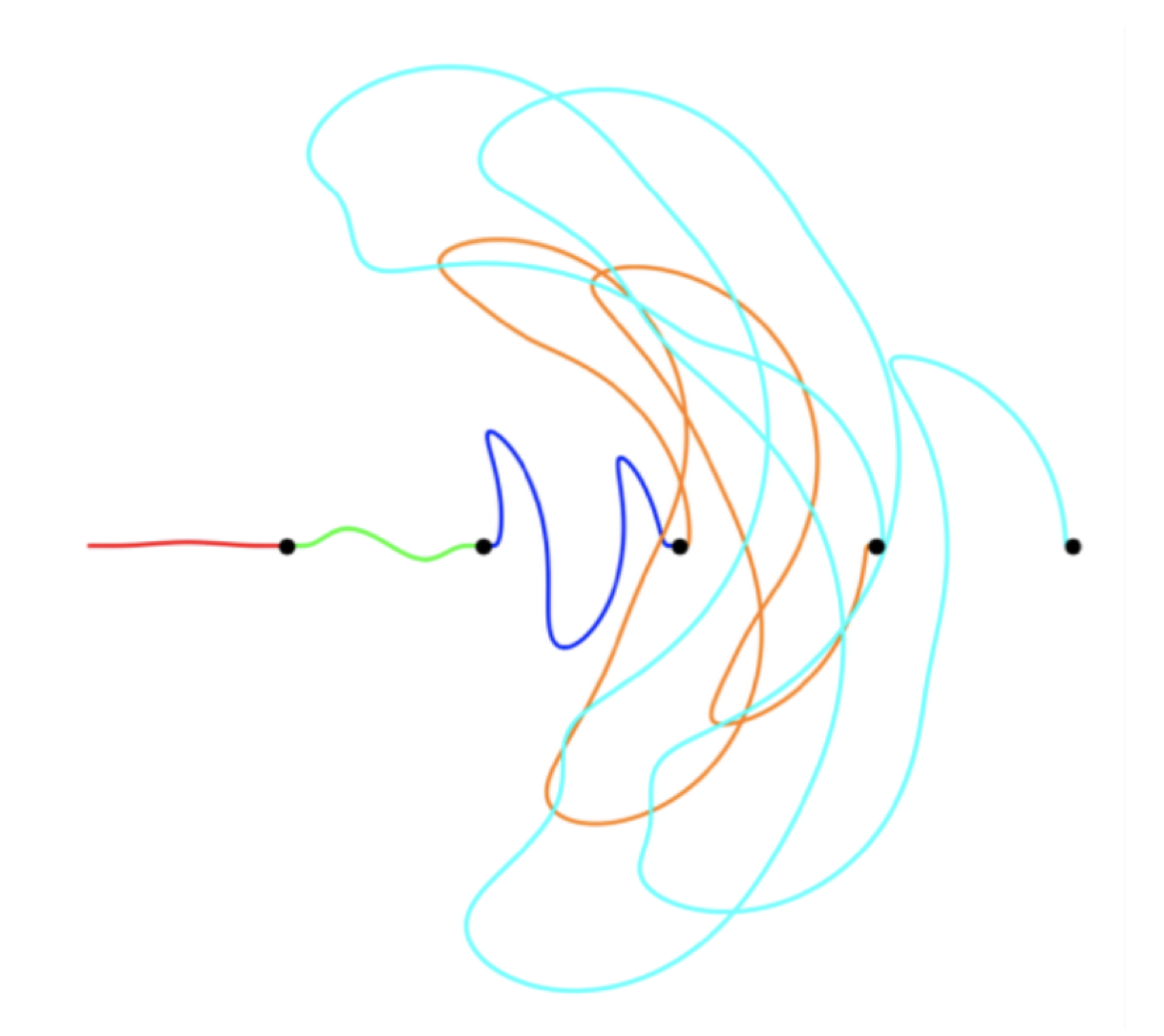}
    \caption{The first 5 segments of a unitrack with $\gamma_0$ being the graph of a function $f(t) = 4e^{\frac{-1}{t(1-t)}}$. Image credit to Stan Wagon.}
\end{figure}

As we can see from Figure \ref{pic:unitrack}, the curves in the sequence $\gamma_0, \gamma_1, \gamma_2, ...$ quickly become quite complicated. Their lengths grow, self-intersections appear, segments of great curvature appear, etc. Here are some of the theorems and conjectures regarding this observation in an informal formulation:

\begin{itemize}
    \item 
        \textbf{Theorem A}: The length of $\gamma_n$ increases with $n$.\\

    \item 
        \textbf{Theorem B}: In fact, the length of $\gamma_n$ tends to infinity with $n$.\\

    \item 
        \textbf{Theorem C}: The `oriented area' bounded between the curves and the horizontal axis remains the same as $n$ grows.\\
        
    \item 
        \textbf{Theorem D}: The number of `zeros' of the curves $\gamma_n$ strictly grows with $n$.\\

    \item 
        \textbf{Conjecture E}: The curves $\gamma_n$ develop self-intersections.\\

    \item
        \textbf{Theorem F}: The `vertical amplitude' of $\gamma_n$ grows with $n$.\\

    \item 
        \textbf{Conjecture G}: The `vertical amplitude' of $\gamma_n$ tends to infinity with $n$.\\

    \item 
        \textbf{Theorem H}: The `horizontal amplitude' of $\gamma_n$ tends to infinity with $n$.\\

    \item
        \textbf{Theorem I}: The curves $\gamma_n$ cannot remain graphs of functions as $n$ grows.
\end{itemize}

Theorems A, C, and D are reasonably simple to prove statements. Theorem F is not particularly complicated either, although, the proof is quite technical. On the other hand, Theorems B, H, and I are significantly more complicated. In Section \ref{proof1} of this paper we will provide a proof of Theorem I by contradiction. We will assume that in fact there is a curve $\gamma_0$ that will produce a sequence of curves $\gamma_n$, where all the curves will be graphs of functions, and we will trace the horizontal movement of points on curves within a bounded interval. Then we will analyze the limiting distribution of the points along the horizontal line, and obtain a bound on the length of the initial curve. When we conclude that the length of the initial curve was no greater than $1$, and therefore $\gamma_0$ must be a straight line, the proof of Theorem I will be completed. Theorem H follows from Theorem I (the proof is in Section \ref{proof2}), and Theorem B follows from Theorem H immediately.

\section{Basic Properties}

    Let $X$ be the set of all infinitely smooth curves, parametrized by closed intervals, and immersed in the Euclidean plane. We define the map:
    $$
    \vphi\colon X\to X;\quad \vphi\colon \gamma \mto \gamma + \frac{\dot{\gamma}}{\|\dot{\gamma}\|},
    $$
    where the addition is pointwise, i.e., $\vphi(\gamma)(t) = \gamma(t) + \frac{\dot{\gamma}(t)}{\|\dot{\gamma}(t)\|}$ as vectors in the Euclidean plane.\\

    \begin{figure}[hbt!]
        \centering
        \includegraphics[height=4cm]{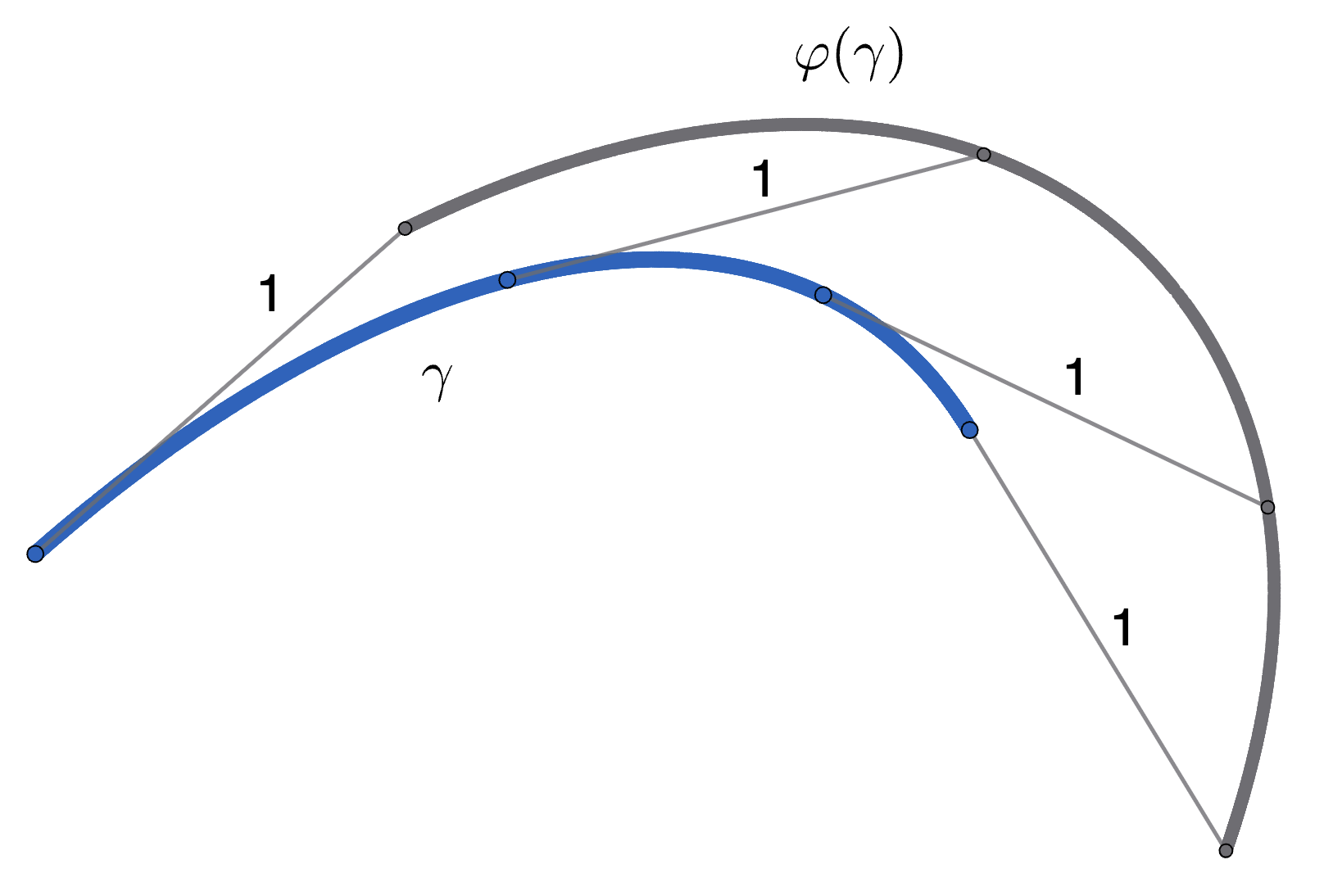}
        \caption{The curve $\vphi(\gamma)$ is the front track of a bicycle, where $\gamma$ is the back track.}
    \end{figure}

    \begin{lemma}
    \label{length}
        If $\gamma\in X$, then $\Len(\vphi(\gamma)) \Ge \Len(\gamma)$, where $\Len$ is length.
    \end{lemma}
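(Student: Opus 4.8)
The plan is to reduce to the arclength parametrization and then compute the speed of $\vphi(\gamma)$ directly via the planar Frenet equations. Since $\gamma$ is an immersion of a closed interval, $\|\dot\gamma\|$ never vanishes, so $\gamma$ admits a smooth unit-speed reparametrization; because both $\Len$ and the operator $\vphi$ are invariant under orientation-preserving reparametrization (the vector $\dot\gamma/\|\dot\gamma\|$ is the geometric unit tangent and depends only on the oriented image of $\gamma$), we may assume from the outset that $\gamma\colon[0,L]\to\R^2$ satisfies $\|\dot\gamma(t)\|\eq 1$ for all $t$, where $L=\Len(\gamma)$.

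Write $T=\dot\gamma$ for the unit tangent field and let $N$ be the unit normal obtained by rotating $T$ by $+\pi/2$, and let $\kappa$ denote the signed curvature, so that $\dot T=\kappa N$. Then $\vphi(\gamma)(t)=\gamma(t)+T(t)$, hence
$$\frac{d}{dt}\vphi(\gamma)(t)=\dot\gamma(t)+\dot T(t)=T(t)+\kappa(t)\,N(t).$$
Since $T(t)$ and $N(t)$ are orthonormal, $\bigl\|T(t)+\kappa(t)N(t)\bigr\|=\sqrt{1+\kappa(t)^2}\Ge 1$. Integrating over $[0,L]$ yields
$$\Len(\vphi(\gamma))=\int_0^L\sqrt{1+\kappa(t)^2}\,dt\;\Ge\;\int_0^L 1\,dt=L=\Len(\gamma),$$
which is the desired inequality.

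There is essentially no hard step; the only points requiring care are the reparametrization reduction (verifying that $\vphi$ commutes with orientation-preserving reparametrization, which is immediate from the explicit formula) and the use of the \emph{planar} Frenet frame rather than the spatial one, so that $\kappa$ is a signed scalar and orthonormality of $\{T,N\}$ holds globally with no non-vanishing-curvature hypothesis needed. As a by-product of the computation, equality holds if and only if $\kappa\eq 0$, i.e. $\gamma$ is a straight segment — worth recording, since the degenerate role of straight lines among unitracks is precisely the theme of the paper.
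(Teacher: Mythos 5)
Your proof is correct and follows essentially the same route as the paper: pass to arclength, observe that the derivative of $\vphi(\gamma)$ is $\dot\gamma+\ddot\gamma$ with $\ddot\gamma\perp\dot\gamma$, and integrate $\sqrt{1+\kappa^2}\Ge 1$. The paper's proof also records the same equality case ($\kappa\eq 0$, i.e. a straight segment), so there is nothing to add.
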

    \begin{proof}[Proof]
        Let $s$ be a natural parameter on $\gamma$. Then:
        $$
        \vphi(\gamma) = \gamma + \dot{\gamma};\quad \dot{\vphi(\gamma)} = \dot{\gamma} + \ddot{\gamma}.
        $$
        Since $s$ is a natural parametrization of $\gamma$, we conclude that $\ddot{\gamma}\perp\dot{\gamma}$. Therefore:
        \begin{multline*}
        \Len(\vphi(\gamma)) = \int\lims_{0}^{\Len(\gamma)}\!\!\|\dot{\vphi(\gamma)}\|ds = \int\lims_{0}^{\Len(\gamma)}\!\!\!\|\dot{\gamma}+\ddot{\gamma}\|\,ds = \int\lims_{0}^{\Len(\gamma)}\!\!\sqrt{\|\dot{\gamma}\|^2 + \|\ddot{\gamma}\|^2}\,ds = \\ = \int\lims_{0}^{\Len(\gamma)}\sqrt{1 + k_\gamma^2(s)}\,ds \Ge \int\lims_{0}^{\Len(\gamma)}\!\! 1 \,ds = \Len(\gamma),
        \end{multline*}
        where $k_\gamma(s) = \|\ddot{\gamma}(s)\|$ is the modulus of the curvature of $\gamma$. Not only we proved that the length of the curve doesn't decrease under the transformation $\vphi$, but also that it strictly increases unless its curvature is constant zero, meaning $\gamma$ is a straight line.
    \end{proof}

    The following proposition features some `expansive' properties of the map $\vphi$:
    
    \begin{prop}
    \label{loc-maximum}
        Consider a smooth curve $\gamma = (x(t), y(t))\colon [a, b]\to\R^2$ and let $\vphi(\gamma) = (u(t), v(t))$. Assume that $\gamma$ is not a horizontal segment, meaning $y(t)\ne const$. Then assuming that the maximum value of $y$ is reached in the interior of $[a, b]$, the maximum value of $v$ is bigger than the maximum value of $y$.
    \end{prop}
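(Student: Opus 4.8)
The plan is to reduce to the arc-length parametrization, where $\varphi$ takes the simple form $\varphi(\gamma)=\gamma+\dot\gamma$, and then to extract a differential inequality for $y$ from the assumption that $v$ never exceeds $\max y$. First I would reparametrize $\gamma$ by arc length $s\in[0,L]$, $L=\Len(\gamma)>0$; this is harmless, because $\varphi$ is built from the unit tangent $\dot\gamma/\|\dot\gamma\|$, so an orientation-preserving reparametrization of $\gamma$ reparametrizes $\varphi(\gamma)$ in exactly the same way, while $\max y$, $\max v$, and the location (interior vs.\ endpoint) of the maximum of $y$ are all unaffected. Now $\|\dot\gamma\|\equiv 1$, so $\varphi(\gamma)(s)=\gamma(s)+\dot\gamma(s)$ and hence $v(s)=y(s)+\dot y(s)$. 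Write $M=\max_{[0,L]}y$ and pick, using the hypothesis, an interior point $s_0\in(0,L)$ with $y(s_0)=M$; since $s_0$ is an interior extremum of the smooth function $y$, we have $\dot y(s_0)=0$, whence $v(s_0)=y(s_0)=M$, so already $\max v\ge M$. It remains to exclude equality, so assume for contradiction that $v\le M$ on all of $[0,L]$.

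The key step is an integrating-factor argument. Set $w:=M-y\ge 0$. From $v\le M$ we get $\dot w=-\dot y=y-v\ge y-M=-w$, so $\dot w+w\ge 0$ and therefore $\frac{d}{ds}\bigl(e^{s}w(s)\bigr)=e^{s}\bigl(\dot w(s)+w(s)\bigr)\ge 0$; thus $s\mapsto e^{s}w(s)$ is non-decreasing on $[0,L]$. Since $w(s_0)=0$, for every $s\in[0,s_0]$ we get $e^{s}w(s)\le e^{s_0}w(s_0)=0$, hence $w(s)\le 0$, and together with $w\ge 0$ this forces $w\equiv 0$ on $[0,s_0]$ — that is, $y(s)=M$ for all $s\in[0,s_0]$, and in particular $y(0)=M$.

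But $y(0)$ is the height of the original curve at its left endpoint $a$, so $y(0)=M$ says precisely that $\max y$ is attained at the boundary, contradicting the hypothesis that the maximum is attained in the interior of $[a,b]$ rather than at the endpoints. Hence $v>M$ somewhere, so $\max v>M=\max y$, which is the claim. I expect the only genuinely delicate point to be exactly this last one: the argument truly requires that the maximum not be attained at the \emph{left} endpoint, and this asymmetry is not an artifact — since $\varphi$ pushes every point forward along the oriented curve, $v$ near the start of the curve is essentially governed by $y$ there, so a curve that coincides with a segment of the line $y=M$ on an initial arc and strictly descends afterwards has $\max v=M$. Everything else (the reparametrization bookkeeping, the differential inequality) is routine.
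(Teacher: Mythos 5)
The paper does not actually print a proof of this proposition (it is described as a ``somewhat tedious page-long calculation'' and omitted, since the result is not used later), so your integrating-factor argument is a genuinely different and much shorter route, and it is essentially correct. The reduction to arc length is fine because $\vphi$ depends only on the unit tangent, so $\vphi(\gamma\circ\tau)=\vphi(\gamma)\circ\tau$ for orientation-preserving $\tau$; then $v=y+\dot y$, the interior maximizer gives $v(s_0)=M$, and the inequality $\dot w+w\Ge 0$ for $w=M-y$ correctly forces $w\equiv 0$ on $[0,s_0]$ via monotonicity of $e^{s}w$. One could quibble that you only need this on $[0,s_0]$ and a one-line Gronwall-type statement would do, but the argument as written is sound.

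The one point you should make explicit rather than leave as a closing remark is that your proof does \emph{not} establish the proposition under the literal reading of its hypothesis, because that reading is too weak for the statement to be true. ``The maximum value of $y$ is reached in the interior'' is satisfied by a curve that equals $M$ on an initial arc $[0,s_0]$ with $s_0>0$ and strictly descends afterwards; as you yourself observe, such a curve has $\max v=M=\max y$, so it is a counterexample to the proposition as literally stated. What your argument proves is the correct sharpened statement: if $\max v\Le M$ and $y$ attains $M$ at some $s_0>0$, then $y\equiv M$ on $[0,s_0]$, and in particular $y(a)=M$. Hence the proposition holds exactly under the reading ``the maximum is not attained at the left endpoint'' (equivalently $y(a)<\max y$), and your proof moreover pinpoints the only degenerate case. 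Since the paper never uses Proposition \ref{loc-maximum} elsewhere, this does not affect anything downstream, but if you present this proof you should state the hypothesis you are actually using.
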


    The proof of this proposition consists of a somewhat tedious page-long calculation. Since we do not use this result in the paper, we will not include the proof here.
    
\section{Horizontal Unitrack} \label{hor-uni}

    We are going to study a variation of the unicycle construction, proposed by D.Finn, and presented in the introduction. We will work with a smaller set of curves, defined below.\\

    Let $Y\subset X$ be the space of immersed curves $\gamma\colon[a, b]\to\R^2$ such that:
    \begin{itemize}
        \item $\gamma(a) = (0, 0)$ and $\gamma(b) = (1, 0)$;
        \item $\dot{\gamma}(a) = \dot{\gamma}(b) = (1, 0)$ in the natural parameter, or, $\dot{\gamma}(a)$ and $\dot{\gamma}(b)$ are horizontal vectors oriented to the right;
        \item $\fa n\Ge 2\,\, \gamma^{(n)}(a) = \gamma^{(n)}(b) = (0, 0)$.
    \end{itemize}
    In other words, $Y$ is the set of smooth curves immersed in $\R^2$, starting at the origin, finishing in $(1, 0)$, and infinitely flat with respect to the horizontal direction at the endpoints.\\

    \begin{figure}[hbt!]
        \centering
        \includegraphics[height=4.3cm]{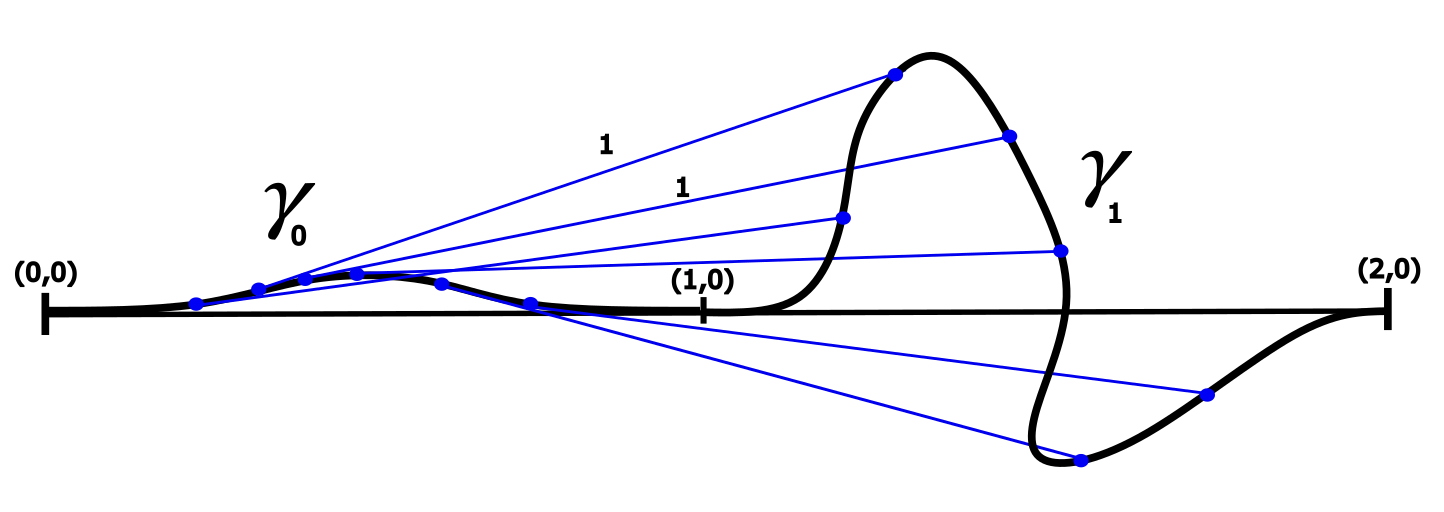}
        \caption{Curve $\gamma_0$ defines the next curve $\gamma_1$}
    \end{figure}
    
    Recall that in Finn's construction of a unicycle track we start with a curve $\gamma_0\in Y$, and produce the sequence of curves $\gamma_n = \vphi^n(\gamma_0)$. Due to the conditions defining $Y\subset X$, the union of the images of these curves forms an infinitely smooth curve. If a bicycle starts riding with its rear wheel along this curve, starting at the beginning of $\gamma_0$, the front wheel will be riding along the same curve, starting at the beginning of $\gamma_1$.\\

    For convenience we will introduce the modification of the initial mapping $\vphi$, but abusing notation, we will refer to it by the same letter $\vphi$ throughout the paper:
    
    \begin{align}
    \label{formula}
    \vphi\colon Y\to Y;\quad \vphi(\gamma) = \gamma + \frac{\dot{\gamma}}{\|\dot{\gamma}\|} - (1, 0).
    \end{align}

    \begin{prop}
        The new map $\vphi$ is well defined, i.e., for all $\gamma\in Y$ its image $\vphi(\gamma)$ also belongs to $Y$.
    \end{prop}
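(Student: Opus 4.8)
The plan is to verify directly that $\vphi(\gamma)$ satisfies each of the three defining conditions of $Y$, using the formula $\vphi(\gamma) = \gamma + \dot\gamma/\|\dot\gamma\| - (1,0)$ together with the endpoint conditions on $\gamma$. First I would check the endpoint positions: since $\dot\gamma(a)$ is a rightward horizontal vector, $\dot\gamma(a)/\|\dot\gamma(a)\| = (1,0)$, so $\vphi(\gamma)(a) = \gamma(a) + (1,0) - (1,0) = (0,0)$; similarly $\vphi(\gamma)(b) = \gamma(b) + (1,0) - (1,0) = (1,0) + (1,0) - (1,0) = (1,0)$. So the first condition holds. For the velocity condition, I would compute $\dot{\vphi(\gamma)} = \dot\gamma + \frac{d}{dt}\bigl(\dot\gamma/\|\dot\gamma\|\bigr)$; the second term is the derivative of the unit tangent, which is orthogonal to $\dot\gamma$ and has magnitude $\|\dot\gamma\|\,k_\gamma$. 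At the endpoints, working in the natural parameter, $\gamma^{(n)}(a) = 0$ for all $n \ge 2$, so in particular $\ddot\gamma(a) = 0$, hence the unit-tangent derivative vanishes at $a$ and $\dot{\vphi(\gamma)}(a) = \dot\gamma(a) = (1,0)$; likewise at $b$.

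The only remaining point is the infinite-flatness condition: $(\vphi\gamma)^{(n)}(a) = (\vphi\gamma)^{(n)}(b) = (0,0)$ for all $n \ge 2$. Here I would argue as follows. We have $\vphi(\gamma) - \gamma + (1,0) = T(t)$ where $T = \dot\gamma/\|\dot\gamma\|$ is the unit tangent vector field. Since the constant $(1,0)$ has vanishing derivatives and $\gamma^{(n)}(a) = 0$ for $n \ge 2$, it suffices to show $T^{(n)}(a) = 0$ for all $n \ge 2$ (and similarly at $b$). Because $T = \dot\gamma \cdot \|\dot\gamma\|^{-1}$, each derivative $T^{(n)}$ is, by the Leibniz and Faà di Bruno rules, a universal polynomial expression in the derivatives $\gamma^{(1)}, \dots, \gamma^{(n+1)}$ and in $\|\dot\gamma\|^{-1}$ and its derivatives — and the latter are themselves such polynomial expressions, since $\|\dot\gamma\|^2 = \langle \dot\gamma, \dot\gamma\rangle$ and $\|\dot\gamma(a)\| = 1 \ne 0$ so no division-by-zero issue arises near $a$. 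Every monomial in the expression for $T^{(n)}$ with $n \ge 2$ contains at least one factor $\gamma^{(k)}$ with $k \ge 2$: indeed $T^{(n)}$ for $n \ge 1$ always involves differentiating $\dot\gamma$ or $\|\dot\gamma\|^{-1}$ at least once, and one more derivative (which exists since $n \ge 2$) forces some factor of order $\ge 2$ to appear. Evaluating at $a$, every such monomial vanishes because $\gamma^{(k)}(a) = 0$ for $k \ge 2$, so $T^{(n)}(a) = 0$. The same holds at $b$.

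I expect the bookkeeping in this last step to be the main obstacle: one must be careful that the combinatorial expansion of $T^{(n)}$ really does place a factor of order $\ge 2$ in every term, rather than, say, a term like $(\gamma^{(1)})^n \cdot (\text{function of }\|\dot\gamma\|)$ slipping through. The clean way to handle this is to induct on $n$: show that for $n \ge 1$, $T^{(n)}$ is a sum of terms each of which is (a smooth function of $\|\dot\gamma\|$, nonvanishing-denominator) times a product of the form $\gamma^{(k_1)}\cdots\gamma^{(k_j)}$ with $j \ge 1$, $k_i \ge 1$, and $\sum(k_i - 1) = n$; then for $n \ge 2$ the constraint $\sum(k_i-1) = n \ge 2$ with each $k_i \ge 1$ forces either some $k_i \ge 3$ or at least two indices equal to $2$, and in all cases some $k_i \ge 2$, giving the vanishing at $a$ and $b$. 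Alternatively, one can avoid the combinatorics entirely by noting that $\gamma$ agrees with the straight line $t \mapsto (0,0)$ (reparametrized) to infinite order at $a$ in the sense that $\gamma - \ell$ is flat, and that $\vphi$ is given by a smooth local formula in the jet of $\gamma$, so $\vphi(\gamma)$ agrees with $\vphi(\ell)$ — a horizontal segment — to infinite order at $a$; since $\vphi(\ell)$ has all derivatives of order $\ge 2$ equal to zero there, so does $\vphi(\gamma)$. Finally I would record that $\vphi(\gamma)$ is still an immersion: $\|\dot{\vphi(\gamma)}\|^2 = 1 + k_\gamma^2 \ge 1 > 0$ in the natural parameter of $\gamma$, which together with the immersion of $\gamma$ itself shows $\vphi(\gamma)$ is immersed, completing the verification that $\vphi(\gamma) \in Y$.
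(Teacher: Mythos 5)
Your proof is correct and verifies the same three conditions in the same order, but you take a noticeably heavier route on the flatness condition. The paper's proof disposes of both the velocity condition and the condition $(\vphi\gamma)^{(n)}=0$ for $n\Ge 2$ with a single observation: compute everything in the arc-length parameter of $\gamma$, where $\|\dot\gamma\|\equiv 1$, so that $\vphi(\gamma)=\gamma+\dot\gamma-(1,0)$ and hence $(\vphi\gamma)^{(n)}=\gamma^{(n)}+\gamma^{(n+1)}$, which vanishes at the endpoints for $n\Ge 2$ (and equals $(1,0)$ for $n=1$) directly from the hypotheses on $\gamma$. This collapses your entire Leibniz/Fa\`a di Bruno bookkeeping — the induction on the invariant $\sum(k_i-1)=n$ — into one line; your combinatorial argument is valid (the invariant is preserved under differentiation, and $\sum(k_i-1)=n\Ge 1$ already forces some $k_i\Ge 2$), and your jet-theoretic alternative is also sound, but both are more machinery than the statement needs. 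What your version buys is that it works in an arbitrary regular parameter rather than privileging arc length, and you additionally record that $\vphi(\gamma)$ remains an immersion ($\|\dot{\vphi(\gamma)}\|^2=1+k_\gamma^2\Ge 1$), a point the paper leaves implicit. One caveat applying equally to both arguments: the derivatives of $\vphi(\gamma)$ are computed in a parameter that is not the arc length of $\vphi(\gamma)$ itself; this is harmless because infinite flatness is invariant under smooth regular reparametrization (the reparametrization to the new arc length is itself infinitely tangent to the identity at the endpoints, since $k_\gamma$ is flat there), but neither you nor the paper says so explicitly.
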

    
    \begin{proof}
        We need to check that the three conditions from the definition of $Y$ are satisfied by $\vphi(\gamma)$:
        \begin{itemize}
            \item $\vphi(\gamma)(a) = \gamma(a) + \frac{\dot{\gamma}(a)}{\|\dot{\gamma}(a)\|} - (1, 0) = (0, 0) + \frac{(1, 0)}{1} - (1, 0) = (0, 0)$,\\
            $\vphi(\gamma)(b) = \gamma(b) + \frac{\dot{\gamma}(b)}{\|\dot{\gamma}(b)\|} - (1, 0) = (1, 0) + \frac{(1, 0)}{1} - (1, 0) = (1, 0)$

            \item Consider the arc length parameter on a curve $\gamma$. Then
            $$
            \dot{\vphi(\gamma)}(b) = \dot{\vphi(\gamma)}(a) = \lt(\gamma(a) + \dot{\gamma}(a) - (1, 0)\rt)\dot{} = \dot{\gamma}(a) + \ddot{\gamma}(a) - (0, 0) = (1, 0) + (0, 0) = (1, 0)
            $$

            \item For all $n > 2$ the following holds in the arc length parameter:
            $$
            \vphi(\gamma)^{(n)} = \lt(\gamma(a) + \dot{\gamma}(a) - (1, 0)\rt)^{(n)} = \gamma^{(n)}(a) + \gamma^{(n+1)}(a) - (0, 0) = (0, 0).
            $$
            Therefore, the curve $\vphi(\gamma)$ is an element of $Y$.
        \end{itemize}
    \end{proof}

\section{Horizontal Unitrack Properties}\label{sec4}
    Denote the projections of $\gamma_n$ on the horizontal and vertical axes by $x_n$ and $y_n$ --- two functions from the parameter interval to $\R$. Then the following  holds:
    \begin{prop}
    \label{zeros}
        The number of `zeros' of $\gamma_n$ increases with $n$ at least by one. More precisely, the number of solutions of the equation $y_{n+1}=0$ is at least one bigger than the number of the solutions of $y_n=0$, if this number if finite. 
    \end{prop}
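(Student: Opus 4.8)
The plan is to pass to the arc-length parametrization of $\gamma_n$ and reduce the statement to Rolle's theorem applied to a suitable auxiliary function. First I would record the recursion governing the vertical coordinate. If $s\in[a_n,b_n]$ is the arc-length parameter on $\gamma_n$, then $\|\dot\gamma_n\|\equiv 1$, so \eqref{formula} reads $\gamma_{n+1}=\gamma_n+\dot\gamma_n-(1,0)$ as vector functions of $s$. Taking second coordinates gives $y_{n+1}(s)=y_n(s)+y_n'(s)$ on $[a_n,b_n]$, where $'$ denotes $d/ds$ and $y_{n+1}$ is the second coordinate of $\gamma_{n+1}$ read in the same parameter $s$. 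The endpoint conditions defining $Y$ give $y_n(a_n)=y_n(b_n)=0$ and $y_n'(a_n)=y_n'(b_n)=0$, and the analogous vanishing for $\gamma_{n+1}$ is consistent with $\gamma_{n+1}\in Y$.

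Next I would introduce $h(s)=e^{s}\,y_n(s)$ on $[a_n,b_n]$. Then $h'(s)=e^{s}\bigl(y_n(s)+y_n'(s)\bigr)=e^{s}\,y_{n+1}(s)$, and since $e^{s}>0$ the zero set of $h$ equals the zero set of $y_n$ while the zero set of $h'$ equals the zero set of $y_{n+1}$. So it suffices to prove that $h'$ has at least one more zero than $h$. Assume $y_n$ has finitely many zeros (so in particular $y_n\not\equiv 0$) and list them as $a_n=z_1<z_2<\dots<z_N=b_n$, with $N\ge 2$. By Rolle's theorem, for each $i=1,\dots,N-1$ there is a point $c_i\in(z_i,z_{i+1})$ with $h'(c_i)=0$; the $c_i$ are pairwise distinct and each lies strictly inside $(a_n,b_n)$. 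Moreover $h'(a_n)=e^{a_n}\bigl(y_n(a_n)+y_n'(a_n)\bigr)=0$ and likewise $h'(b_n)=0$, and these two points are distinct from all the $c_i$. Hence $h'$, and therefore $y_{n+1}$, has at least $(N-1)+2=N+1$ zeros, which is one more than $y_n$.

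There is not much of an obstacle here: essentially the whole content is the observation that multiplication by $e^{s}$ converts the operator $y\mapsto y+y'$ into differentiation, after which the count is just Rolle. The only points requiring a little care are, first, justifying $y_{n+1}=y_n+y_n'$ --- which is exactly \eqref{formula} written in the arc-length parametrization, using $\|\dot\gamma_n\|=1$ --- and, second, the bookkeeping that the two endpoint zeros of $h'$ are genuinely additional and not already among the zeros produced by Rolle's theorem; the latter is immediate because each Rolle point $c_i$ lies in the open interval $(z_i,z_{i+1})\subseteq(a_n,b_n)$, whereas $a_n$ and $b_n$ do not. Finally one should remark that the hypothesis "finitely many zeros" is what rules out the degenerate case $y_n\equiv 0$, in which $\gamma_0$ is a straight line and the statement is vacuous.
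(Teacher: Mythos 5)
Your proof is correct. The paper does not include its own argument for this proposition (it defers to the cited reference of Levi and Tabachnikov), and your reduction of $y\mapsto y+y'$ to differentiation via the integrating factor $e^{s}$, followed by Rolle's theorem and the two extra endpoint zeros coming from the flatness conditions defining $Y$, is exactly the standard argument given there.
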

    The proof can be found in \cite{LT}.
    \begin{theorem}
    \label{graphs}
        Assume that for every $n\Ge 0$ the curve $\gamma_n$ with a parameter $t$ satisfies $y_n(t) = f_n(x_n(t))$ for some smooth function $f_n\colon[0, 1]\to\R$, i.e., $\gamma_n$ is a graph of a smooth function. Then for all $n$ the function $f_n(x)$ is a constant zero function. In other words, unless the unitrack is trivial, after some $n$ the curves $\gamma_n$ will fail to remain graphs of functions.
    \end{theorem}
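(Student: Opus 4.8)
The plan is to prove directly that $\Len(\gamma_0)\le 1$; since $\gamma_0$ joins $(0,0)$ to $(1,0)$, this forces $\gamma_0$ to be the straight segment, and because $\vphi$ fixes the segment (its unit tangent is constant), every $\gamma_n$ is then the segment and every $f_n$ vanishes, which is the claim. Throughout I would parametrize \emph{all} of the curves $\gamma_n=\vphi^n(\gamma_0)$ by one and the same parameter $s\in[0,L]$, namely the arc length of $\gamma_0$ with $L=\Len(\gamma_0)$ --- recall from \eqref{formula} that $\vphi$ keeps the parametrization --- and write $\gamma_n(s)=(x_n(s),y_n(s))$. Since every $\gamma_n$ lies in $Y$ (by the proposition that $\vphi(Y)\subseteq Y$), $x_n(0)=0$ and $x_n(L)=1$.

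First I would cash in the graph hypothesis. Writing $\gamma_n(s)=\bigl(x_n(s),f_n(x_n(s))\bigr)$ gives $\dot\gamma_n(s)=\dot x_n(s)\,\bigl(1,f_n'(x_n(s))\bigr)$, so immersedness forces $\dot x_n(s)\ne0$; hence $x_n$ is a strictly increasing diffeomorphism $[0,L]\to[0,1]$ and the tangent angle $\theta_n(s):=\Arg\dot\gamma_n(s)$ lies in $(-\pi/2,\pi/2)$. The crucial point is that the horizontal coordinate of every tracked point now stays in the \emph{bounded} interval $[0,1]$ for all time. The horizontal component of \eqref{formula} reads $x_{n+1}(s)=x_n(s)+\cos\theta_n(s)-1$, so the sequence $\bigl(x_n(s)\bigr)_{n\ge0}$ is non-increasing and bounded below by $0$, hence convergent, and summing the increments yields
\[
\sum_{n\ge0}\bigl(1-\cos\theta_n(s)\bigr)=x_0(s)-\lim_{n\to\infty}x_n(s)\le1 .
\]
Therefore $\theta_n(s)\to0$ as $n\to\infty$, for every fixed $s$.

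Next I would use that the speed never decreases. Differentiating \eqref{formula} in $s$, the vector $\tfrac{d}{ds}\bigl(\dot\gamma_n/\|\dot\gamma_n\|\bigr)$ is orthogonal to the unit vector $\dot\gamma_n/\|\dot\gamma_n\|$, hence to $\dot\gamma_n$, so $\|\dot\gamma_{n+1}(s)\|^2=\|\dot\gamma_n(s)\|^2+\bigl\|\tfrac{d}{ds}(\dot\gamma_n/\|\dot\gamma_n\|)\bigr\|^2\ge\|\dot\gamma_n(s)\|^2$ --- the pointwise form of Lemma \ref{length}. As $\|\dot\gamma_0(s)\|\equiv1$, we get $\|\dot\gamma_n(s)\|\ge1$ for all $n$ and $s$. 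Hence the horizontal speed of $\gamma_n$ satisfies $\dot x_n(s)=\cos\theta_n(s)\,\|\dot\gamma_n(s)\|\ge\cos\theta_n(s)$, so $\liminf_{n}\dot x_n(s)\ge\lim_n\cos\theta_n(s)=1$ for every $s$. Since $\dot x_n\ge0$ and $\int_0^L\dot x_n(s)\,ds=x_n(L)-x_n(0)=1$ for every $n$, Fatou's lemma gives
\[
L=\int_0^L 1\,ds\le\int_0^L\liminf_n\dot x_n(s)\,ds\le\liminf_n\int_0^L\dot x_n(s)\,ds=1 ,
\]
which is the bound we wanted. (In the language of the introduction: the push-forward of arc length on $\gamma_0$ under $x_n$ is a measure on $[0,1]$ of constant mass $L$ whose density is at most $1/\cos\theta_n\to1$, so the limiting horizontal occupation measure has mass at most $1$, forcing $L\le1$.)

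The step I expect to carry the real weight is the Fatou estimate: it is the mechanism that converts the purely pointwise facts ``the tracked tangent angles tend to $0$'' and ``the speeds never drop below their initial value $1$'' into the global bound $\Len(\gamma_0)\le1$. One might fear that $\theta_n(s)\to0$ very non-uniformly in $s$ while the lengths $\Len(\gamma_n)$ run off to infinity, but Fatou needs no uniformity --- only the pointwise $\liminf$ together with the fact that each $x_n$ sweeps $[0,1]$ exactly once, so that $\int_0^L\dot x_n=1$ is pinned for every $n$. Everything else is routine: verifying that $\vphi$ preserves the common parametrization, that the graph hypothesis really confines each $x_n$ to $[0,1]$, and that $\Len(\gamma_0)\le1$ (with the trivial reverse inequality) forces $f_0'\equiv0$, hence $f_0\equiv0$, hence by $\vphi$-invariance of the segment $f_n\equiv0$ for all $n$.
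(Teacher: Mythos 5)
Your proof is correct, and it takes a genuinely different --- and noticeably shorter --- route than the paper's. Both arguments rest on the same two mechanisms: the telescoping identity $x_{n+1}=x_n-(1-\cos\theta_n)$ together with $0\Le x_n\Le 1$, which forces $\sum_n(1-\cos\theta_n(s))\Le 1$ and hence $\theta_n(s)\to 0$ pointwise; and the monotonicity of length under $\vphi$, which transfers bounds on $\gamma_n$ back to $\gamma_0$. The paper, however, converts the pointwise information into length estimates by studying the limit $L(t)=\lim_n x_n(t)$, coping explicitly with its possible discontinuities, proving the local bound $\Len(\gamma_0|_{[a,b]})\Le\delta/(1-\delta)$ on preimages of subintervals of $\im L$, sharpening it to $\Len(\gamma_0|_{[a,b]})\Le\delta$ via an $m$-fold partition, and finally summing over the components of $\im L$. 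You bypass all of that bookkeeping with Fatou's lemma: the pointwise speed inequality $\|\dot\gamma_n(s)\|\Ge\|\dot\gamma_0(s)\|=1$ (which is exactly the integrand comparison inside the proof of Lemma \ref{length}) gives $\liminf_n\dot x_n(s)\Ge\lim_n\cos\theta_n(s)=1$, while $\int_0^L\dot x_n\,ds=1$ is pinned for every $n$, so $\Len(\gamma_0)\Le 1$ follows in one stroke with no uniformity in $s$ required. What each approach buys: yours is shorter and conceptually cleaner (a horizontal occupation-measure argument), at the modest cost of invoking measure theory; the paper's is elementary and yields the local estimates $\Len(\gamma_0|_{I_J})\Le\delta_J$ on each component of $\im L$, which carry more refined information about where the length of $\gamma_0$ can sit over $[0,1]$. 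The individual steps of your argument all check out: immersedness plus the graph hypothesis does force $\dot x_n>0$ and $\theta_n\in(-\pi/2,\pi/2)$; the orthogonality of $\frac{d}{ds}\bigl(\dot\gamma_n/\|\dot\gamma_n\|\bigr)$ to $\dot\gamma_n$ is correct and justifies both the speed monotonicity and the preservation of immersedness; and the endgame ($\Len(\gamma_0)=1$ forces the straight segment, which is a fixed point of $\vphi$) is sound.
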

    The next section of this paper will be devoted to the proof of this theorem. Before that, we introduce more definitions and present some conjectures.\\
    
    Define the \textit{horizontal amplitude} of the curve $\gamma(t) = (x(t), y(t))$ with a parameter $t\in[a, b]$ as:
    $$
    H(\gamma) = \max\lims_{t\in[a, b]}{x(t)} - \min\lims_{t\in[a, b]}{x(t)}.
    $$
    Similarly, the \textit{vertical amplitude} of a curve $\gamma$ is:
    $$
    V(\gamma) = \max\lims_{t\in[a, b]}{y(t)} - \min\lims_{t\in[a, b]}{y(t)}.
    $$
    \begin{theorem}
    \label{hor-ampl}
        Assuming that $\gamma_0$ is not trivial, the sequence $H(\gamma_n)$ is unbounded and non-decreasing. Moreover, there exist $c_1, c_2 > 0$ depending on $\gamma_0$, such that:
        $$
        n - c_1 \Le H(\gamma_n) \Le 2n - c_2
        $$
    \end{theorem}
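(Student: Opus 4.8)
The plan is to read off the horizontal extent of $\gamma_n$ from the pointwise comparison of consecutive curves provided by \eqref{formula}, and to isolate a single coercive mechanism that drives the leftmost point of $\gamma_n$ to $-\infty$. Transport the arc-length parameter of $\gamma_0$ forward along the iteration, so that all $\gamma_n=(x_n,y_n)$ are defined on one interval and the point of $\gamma_{n+1}$ corresponding to $\gamma_n(s)$ is $\gamma_n(s)+(\cos\theta_n(s),\sin\theta_n(s))-(1,0)$, where $\theta_n(s)$ is the tangent angle of $\gamma_n$ at that point. Taking first coordinates,
\[
x_{n+1}(s)=x_n(s)+\cos\theta_n(s)-1 ,
\]
and since $\cos\theta_n-1\in[-2,0]$, both $\max_s x_n$ and $\min_s x_n$ are non-increasing in $n$. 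As $\gamma_n$ joins $(0,0)$ to $(1,0)$ we always have $\max_s x_n\ge 1$ and $\min_s x_n\le 0$; in particular $\max_s x_n$ is trapped in $[1,\max_s x_0]$ (and equals $1$ from the first index at which it does so, e.g.\ always, if $\gamma_0$ is a graph), so every bit of growth of $H(\gamma_n)=\max_s x_n-\min_s x_n$ must come from $\min_s x_n\to-\infty$.

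Here is the engine of the lower bound. If $\min_s x_n<0$, the minimum is attained at an interior parameter $s^\ast$ (the two ends of $\gamma_n$ lie on $x=0$ and $x=1$), and there the tangent of $\gamma_n$ is vertical, i.e.\ $\cos\theta_n(s^\ast)=0$; feeding this into the displayed identity gives $x_{n+1}(s^\ast)=x_n(s^\ast)-1$, hence $\min_s x_{n+1}\le\min_s x_n-1$. Thus, as soon as $\min_s x_N<0$ for a single index $N$, an immediate induction yields $\min_s x_n\le\min_s x_N-(n-N)$ for all $n\ge N$, and with $\max_s x_n\ge 1$ this gives $H(\gamma_n)\ge n-c_1$ for a suitable $c_1>0$ (the range $n<N$ is covered because $H(\gamma_n)\ge1$ always). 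So the whole lower bound reduces to the assertion that some $\gamma_N$ pokes to the left of the line $x=0$.

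That last assertion is where Theorem \ref{graphs} is used, and I expect it to be the main obstacle. Suppose for contradiction that $\min_s x_n\ge 0$ for every $n$; then, combined with $\max_s x_n\le\max_s x_0$, all the $\gamma_n$ lie inside the fixed vertical strip $0\le x\le\max_s x_0$. Telescoping the identity, $x_n(s)=x_0(s)-\sum_{j<n}\bigl(1-\cos\theta_j(s)\bigr)\ge 0$, so $\sum_{j\ge 0}\bigl(1-\cos\theta_j(s)\bigr)\le\max_s x_0$ for every $s$; in particular the series converges and $\theta_j(s)\to 0\pmod{2\pi}$ pointwise. The plan is to promote this decay — by tracking the horizontal distribution of the curve, i.e.\ the push-forward of arc length under $x_n$, exactly as in the argument of Section \ref{proof1} — to the statement that $x_n$ is monotone for all large $n$, so that $\gamma_n$ is eventually the graph of a function; Theorem \ref{graphs} then forces $\gamma_0$ to be trivial, the desired contradiction. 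I anticipate the delicate point to be the passage from this pointwise control to a uniform one, since the curvatures of the $\gamma_n$ are not bounded along the sequence; this is exactly the technical core already present in the proof of Theorem \ref{graphs}, and I would invoke that machinery rather than rebuild it.

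For the upper bound, the same identity gives $x_{n+1}(s)-x_n(s)\in[-2,0]$, so $\max_s x_{n+1}\le\max_s x_n$ and $\min_s x_{n+1}\ge\min_s x_n-2$, whence $H(\gamma_{n+1})\le H(\gamma_n)+2$ and $H(\gamma_n)\le 2n+H(\gamma_0)$ — the claimed bound up to an additive constant, which one sharpens by noting that $\min_s x_{n+1}$ is never reached over the old leftmost point of $\gamma_n$ (where the tangent is vertical, not left-pointing), so the increment $2$ is never attained. For the non-decreasing claim, write $g_n(s)=x_n(s)+\cos\theta_n(s)$, so that $x_{n+1}=g_n-1$ and $H(\gamma_{n+1})=\mathrm{osc}_s(g_n)$; since $\cos\theta_n$ vanishes at every interior extremum of $x_n$, the function $g_n$ attains both $\max_s x_n$ and $\min_s x_n$, giving $\mathrm{osc}_s(g_n)\ge\mathrm{osc}_s(x_n)=H(\gamma_n)$, the only exceptional configurations being those in which an extremum of $x_n$ falls on an endpoint of the parameter interval — which pins $\gamma_n$ inside $0\le x\le1$, where $H(\gamma_n)=1\le H(\gamma_{n+1})$ — so these are dispatched by hand. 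Unboundedness of $H(\gamma_n)$ is then immediate from the lower bound, completing Theorem \ref{hor-ampl}.
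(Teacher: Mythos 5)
Your skeleton for the two bounds is essentially the paper's: the leftmost point of $\gamma_n$, once it is interior, sits at a vertical tangency and therefore moves left by exactly $1$ under $\vphi$, while no point ever moves left by more than $2$ or to the right at all, and $r_n=\max_s x_n$ stays trapped in $[1,r_0]$. That part is sound.

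The genuine gap is in starting the engine, i.e.\ in proving that $\min_s x_N<0$ for some $N$. You reduce this to a contradiction argument: assume all $\gamma_n$ stay in the strip $0\le x\le r_0$, deduce that $\sum_j\bigl(1-\cos\theta_j(s)\bigr)$ converges pointwise, and then announce a ``plan'' to promote this pointwise decay to the statement that $x_n$ is eventually monotone in $s$, so that $\gamma_n$ is eventually a graph. That promotion is not carried out, and it is not supplied by the machinery of Section \ref{proof1}: pointwise summability of $1-\cos\theta_j(s)$ gives no uniform control in $s$ at any fixed $n$ (Dini-type arguments fail because the limit need not be continuous, which is exactly the difficulty the paper fights in Section \ref{proof1}), so nothing prevents every $\gamma_n$ from having some parameter where the tangent is far from horizontal. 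You have correctly identified this as the delicate point, but identifying it is not closing it.

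Moreover, the detour is unnecessary. Theorem \ref{graphs} already hands you, for nontrivial $\gamma_0$, an index $N$ at which $\gamma_N$ fails to be a graph; a smooth immersed non-graph joining $(0,0)$ to $(1,0)$ has a vertical tangency, and (a short argument using $\dot\gamma_N(1)=(1,0)$, as in the paper) one can choose such a tangency at a point with $x_N(t_v)<1$. Then $x_{N+1}(t_v)=x_N(t_v)-1<0$, and your induction takes over. I would replace your strip-contradiction paragraph with this direct step. Two smaller points: your claim that an extremum of $x_n$ at an endpoint ``pins $\gamma_n$ inside $0\le x\le 1$'' is false (only that one extremum is pinned), though the monotonicity of $H_n$ still goes through since $g_n$ takes the value $x_n+1$ at the right endpoint; and your upper bound comes out as $H_n\le 2n+H(\gamma_0)$ rather than $2n-c_2$ with $c_2>0$, which you acknowledge but do not repair.
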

    This theorem follows from Theorem \ref{graphs}, and the proof will be provided in the 6-th section of the paper.
    \begin{conjecture}
    \label{vert-ampl}
        The sequence $V(\gamma_n)$ is unbounded.
    \end{conjecture}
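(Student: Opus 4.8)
The plan is to argue by contradiction as announced, keeping throughout the hypothesis that every $\gamma_n=\vphi^n(\gamma_0)$ is a graph over $[0,1]$, and to reduce the theorem to the single claim $\Len(\gamma_0)=1$; since $\gamma_0$ joins $(0,0)$ to $(1,0)$, this forces $\gamma_0$ to be the horizontal segment, hence $f_0\equiv 0$, and then $\vphi(\gamma_0)=\gamma_0$ makes all $f_n\equiv 0$. Recall $\Len(\gamma_0)\Ge 1$ trivially and, by Lemma \ref{length}, the sequence $\Len(\gamma_n)$ is non-decreasing, so it suffices to produce a uniform upper bound on $\Len(\gamma_n)$ and then squeeze it down to $1$.

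First I would fix coordinates. If $\gamma_n$ is a graph of $f_n\colon[0,1]\to\R$ then, being immersed, its $x$-coordinate is strictly monotone, so $x\in[0,1]$ is a legitimate parameter and $\gamma_n(x)=(x,f_n(x))$ with $f_n$ infinitely flat at $0,1$ (inherited from $Y$). Writing $\theta_n(x)=\arctan f_n'(x)$ for the tangent angle, a direct computation from (\ref{formula}) shows that $\vphi$ sends the point of $\gamma_n$ over $x$ to the point of $\gamma_{n+1}$ over $g_n(x):=x-1+\cos\theta_n(x)$, and here $0\le g_n(x)\le x$, $g_n(0)=0$, $g_n(1)=1$. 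Because $\gamma_{n+1}$ is again an immersed graph, $g_n$ must be weakly increasing — otherwise the parametrization $x\mapsto\bigl(g_n(x),\,f_n(x)+\sin\theta_n(x)\bigr)$ of $\gamma_{n+1}$ would have an interior critical point of its first coordinate while remaining immersed, impossible for a graph — so $g_n$ is an increasing self-homeomorphism of $[0,1]$ fixing the endpoints. Setting $G_n=g_{n-1}\circ\cdots\circ g_0$ (with $G_0=\mathrm{id}$), the value $G_n(x)$ is the horizontal position on $\gamma_n$ of the material point that started over $x$ on $\gamma_0$; it is non-increasing in $n$ and $\Ge 0$, so $G_n\downarrow G_\infty$. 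Telescoping $\delta_i(x):=G_i(x)-G_{i+1}(x)=1-\cos\theta_i(G_i(x))\Ge 0$ gives $\sum_{i\Ge 0}\delta_i(x)=x-G_\infty(x)\le x\le 1$ for every $x$.

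The heart of the argument is converting this into a length bound, and this is \textbf{the step I expect to be the main obstacle}: the relation $\sum_i\delta_i(x)\le x$ only says points drift leftward and not too far, and one must extract from it a genuine control of $\Len(\gamma_0)$. I would first run the construction of the previous paragraph starting from $\gamma_m$ instead of $\gamma_0$, and integrate the resulting inequality $\sum_i\delta_i^{(m)}(x)\le x$ against the arc-length element $\sec\theta_m(x)\,dx$ of $\gamma_m$. The $i=0$ term alone equals $\int_0^1(1-\cos\theta_m)\sec\theta_m\,dx=\Len(\gamma_m)-1$, the whole (non-negative) sum is $\le\int_0^1 x\sec\theta_m\,dx$, and therefore, for every $m$,
\[
\int_0^1(1-x)\,\sec\theta_m(x)\,dx\ \le\ 1 ,
\]
so in particular the length of $\gamma_m$ over $[0,\tfrac12]$ is at most $2$, uniformly in $m$. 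The second ingredient is that $\vphi$ drives steepness to the left: if $|\theta_k(x)|>\tfrac\pi3$ then $g_k(x)\le\cos\theta_k(x)<\tfrac12$, so $\vphi$ carries the sub-arc of $\gamma_k$ on which $|\theta_k|>\tfrac\pi3$ — whose length is at least $\Len(\gamma_k)-2$, since $\sec\theta_k\le2$ off that set and that set sits inside $[0,1]$ — into the part of $\gamma_{k+1}$ lying over $[0,\tfrac12)$; by the length computation in the proof of Lemma \ref{length} applied to this sub-arc, its image has length at least $\Len(\gamma_k)-2$ as well. Comparing the two bounds gives $\Len(\gamma_k)-2\le 2$, i.e. $\Len(\gamma_k)\le 4$ for every $k$.

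Finally I would close the argument. The lengths $\Len(\gamma_n)$ are non-decreasing and bounded, hence convergent, so $\Len(\gamma_{n+1})-\Len(\gamma_n)\to 0$; by the identity obtained in the proof of Lemma \ref{length}, $\Len(\gamma_{n+1})-\Len(\gamma_n)=\int_0^{\Len(\gamma_n)}\bigl(\sqrt{1+k_{\gamma_n}^2}-1\bigr)\,ds$, and an elementary inequality ($\sqrt{1+t^2}-1\Ge\tfrac13\min(t^2,|t|)$) together with Cauchy--Schwarz and the length bound upgrades this to $\int|k_{\gamma_n}|\,ds\to 0$. Since $\gamma_n$ starts tangent to $(1,0)$, its tangent angle then tends to $0$ uniformly, so from $1=\int\cos\theta_n\,ds\Ge\Len(\gamma_n)\cos\!\bigl(\sup_s|\theta_n(s)|\bigr)$ we get $\limsup_n\Len(\gamma_n)\le 1$, forcing $\Len(\gamma_n)\to 1$ and hence $\Len(\gamma_0)=1$. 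This makes $\gamma_0$ the horizontal segment, and the proof is complete.
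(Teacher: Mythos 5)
The statement you were asked to prove is Conjecture \ref{vert-ampl}: that the vertical amplitude $V(\gamma_n)$ is unbounded. This is an open conjecture in the paper --- no proof of it exists there --- and your proposal does not address it either. Nowhere in your argument do $V$, $y_n$, or the vertical extent of the curves appear; what you have written is an attempted proof of Theorem \ref{graphs} (that the $\gamma_n$ cannot all remain graphs unless $\gamma_0$ is trivial), carried out under the standing hypothesis that every $\gamma_n$ is a graph over $[0,1]$. That hypothesis is exactly what fails for a non-trivial unitrack, so once some $\gamma_N$ ceases to be a graph your argument says nothing about the actual sequence. More to the point, there is no logical bridge from ``the curves develop vertical tangencies'' to ``$V(\gamma_n)\to\infty$'': a curve can fail to be a graph, have enormous length and many vertical tangencies, and still lie inside a fixed strip $|y|\le M$. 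The only fact about $V(\gamma_n)$ recorded in the paper is that it is increasing (the remark after Conjecture \ref{vert-ampl}, via Proposition \ref{loc-maximum}), and an increasing sequence need not be unbounded. So the proposal is not a proof of the stated conjecture with a gap; it is a proof of a different statement.

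Read instead as a proof of Theorem \ref{graphs}, your argument is genuinely different from the paper's (which passes to the pointwise limit $L$ of the horizontal projections and refines length estimates over partitions of $\im L$): you derive a uniform length bound $\Len(\gamma_k)\le 4$ from the weighted inequality $\int_0^1(1-x)\sec\theta_m\,dx\le 1$ combined with the observation that the steep part of $\gamma_k$ is pushed over $[0,\tfrac12)$, and then use boundedness of the lengths to force the total curvature, hence the tangent angle, to zero. That route is attractive and, if the details hold up (in particular the monotonicity of $g_n$ and the comparison of the image of the steep sub-arc with the portion of $\gamma_{k+1}$ over $[0,\tfrac12)$), it would be a shorter proof of Theorem \ref{graphs}. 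But it does not touch Conjecture \ref{vert-ampl}, which remains open.
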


    One may notice that Theorem \ref{hor-ampl} and Conjecture \ref{vert-ampl} are very similar. However, there is a very significant difference between horizontal and vertical directions in the horizontal unitrack construction. In the modified version of $\vphi$ it is reflected in the last term of the formula (\ref{formula}).\\
    
    \textit{Remark:} the fact that $V(\gamma_n)$ is an increasing sequence, follows from the Proposition \ref{loc-maximum}, as both global maximum and global minimum of $y_{n+1}$ are greater and smaller than global maximum and minimum of $y_n$ respectively.\\

    \begin{conjecture}
        If $\gamma_0$ is not trivial, then there exists $n$, such that $\gamma_n$ has self-intersections. 
    \end{conjecture}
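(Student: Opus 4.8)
As this is a conjecture I only outline a line of attack and indicate where it stalls. The plan is to argue by contradiction: assume $\gamma_n$ is an embedded arc for every $n\Ge 0$, and contradict the growth already available, namely Theorem \ref{graphs}, Theorem \ref{hor-ampl}, Proposition \ref{zeros}, Theorem B, and Theorem C.

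First I would draw structural consequences from the contradiction hypothesis. Since $H(\gamma_n)\Ge n-c_1>1$ for large $n$ while $x_n(a)=0$, $x_n(b)=1$, the projection $x_n$ is non-monotone for all large $n$, hence has an interior critical point --- a point of vertical tangent, which I will call a \emph{fold}. Let $C_n$ be the number of folds of $\gamma_n$. Between consecutive folds $\gamma_n$ is a graph over an $x$-interval, so $\gamma_n$ is a concatenation of $C_n+1$ monotone strands; embeddedness forces this family of strands to be non-crossing (a lamination), which together with the boundary data (both endpoints at height $0$, horizontal, rightward, infinitely flat tangents) rigidly constrains how the strands may be nested. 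The second structural point is that $C_n$ should grow: writing $\theta_n$ for the continuous lift of the tangent angle of $\gamma_n$ and $\kappa_n$ for its signed curvature, one has the exact recursion $\theta_{n+1}=\theta_n+\arctan\kappa_n$ in the arc-length parameter of $\gamma_n$, so the folds of $\gamma_{n+1}$ are the solutions of $\theta_n+\arctan\kappa_n\in\tfrac{\pi}{2}+\pi\Z$; since $\Len(\gamma_n)\to\infty$ by Theorem B and $\Len(\gamma_{n+1})=\int\sqrt{1+\kappa_n^2}\,ds$, the curvature $\kappa_n$ cannot stay uniformly small, and I would turn this into $\theta_n$ oscillating across the lines $\tfrac{\pi}{2}+\pi\Z$ ever more often, hence $C_n\to\infty$. (This sub-step is itself delicate and is close in spirit to the argument behind Proposition \ref{zeros}.)

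With ``many non-crossing strands, fixed endpoints'' in hand I would play this against the quantitative invariants. The signed area between $\gamma_n$ and the $x$-axis is $n$-independent (Theorem C), the number of zeros $Z_n$ grows by at least one per step (Proposition \ref{zeros}), the horizontal amplitude satisfies $n-c_1\Le H(\gamma_n)\Le 2n-c_2$ (Theorem \ref{hor-ampl}), and the \emph{signed} total turning $\theta_n(b)-\theta_n(a)$ is an $n$-independent multiple of $2\pi$, since the recursion above changes it by $\arctan\kappa_n(b)-\arctan\kappa_n(a)=0$. So $\gamma_n$ would be a non-self-crossing union of $\sim C_n$ monotone strands that collectively fan out to horizontal width $\sim n$, cross the axis $\sim n$ times, enclose a fixed signed area, and accumulate no net rotation of the tangent --- and this combination ought to be impossible. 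Converting ``ought to be impossible'' into an inequality is the crux; the two concrete routes I see are (i) a writhe / secant-map count for the arc $\gamma_n$ (if $\gamma_n$ is embedded the secant map extends over the (blown-up) parameter triangle, forcing a relation between the tangent turning and the winding numbers $I_P,I_Q$ of $\gamma_n-\gamma_n(a)$ and $\gamma_n(b)-\gamma_n$ about the origin, equivalently the windings of $\gamma_n$ about its two endpoints), and (ii) an Euler-characteristic count for the planar subdivision cut out by $\gamma_n$ together with the $x$-axis and the vertical lines through the folds, controlled by $C_n$, $Z_n$ and the fixed area.

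The main obstacle is exactly this last conversion, and it is presumably why the conjecture is still open. The proof of Theorem \ref{graphs} works because the graph hypothesis linearizes the horizontal dynamics --- one tracks how the $x$-coordinates of a fixed family of fibres are transported by $\vphi$ and studies their limiting distribution --- and once $\gamma_n$ is only assumed embedded that transport is no longer defined over a fixed interval $[0,1]$ and has no obvious substitute. In the topological route the missing input is a lower bound on how $\gamma_n$ winds around the two fixed points $\gamma_n(a)$ and $\gamma_n(b)$: embeddedness forces those windings (the terms $I_P,I_Q$ above) to cancel the tangent turning, but there is currently no control showing they are nonzero, since a long, very wide embedded arc can in principle still fail to encircle either endpoint. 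So the strategy reduces the conjecture to the clean statement ``no embedded arc in the $Y$-family can have both $C_n$ and $H(\gamma_n)$ large while its signed area and net tangent turning stay bounded'' --- but that statement is the hard part, and genuine progress likely needs a new understanding of how $\vphi$ acts on non-graph curves rather than a repackaging of the estimates already proved.
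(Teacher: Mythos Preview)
The paper does not prove this statement: it is stated as an open conjecture (Conjecture E in the introduction), and no argument for it is given anywhere in the paper. So there is no ``paper's own proof'' to compare against, and your decision to present only a heuristic outline, together with an honest account of where it stalls, is the correct posture.

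On the substance of your outline: the ingredients you assemble are the right ones, and the recursion $\theta_{n+1}=\theta_n+\arctan\kappa_n$ together with $\kappa_n(a)=\kappa_n(b)=0$ does show that the total signed turning is an invariant of the iteration, which is a useful observation not recorded in the paper. Your reduction to ``no embedded arc in $Y$ can have $C_n$ and $H(\gamma_n)$ both large while signed area and net turning stay bounded'' is a clean reformulation. But, as you say yourself, this reformulation is not obviously easier than the original conjecture: long embedded arcs with many folds, large horizontal extent, bounded signed area, and zero net turning are not ruled out by any soft topological count (your routes (i) and (ii) both need a quantitative input that is currently missing). The step ``$C_n\to\infty$'' is also not established --- growth of $\Len(\gamma_n)$ forces $\kappa_n$ to be large somewhere, but not that $\theta_n$ crosses $\tfrac{\pi}{2}+\pi\Z$ increasingly often; a single long excursion with $\theta_n$ staying in one half-plane is not excluded by what you wrote.

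In short: your proposal is not a proof and does not claim to be one; the paper has no proof either; and your diagnosis of the obstruction --- that the horizontal-transport argument behind Theorem~\ref{graphs} has no analogue once the graph hypothesis is dropped --- matches the state of affairs the paper leaves open.
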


    There is also a weaker variation of this conjecture, that is still open.

    \begin{conjecture}
        If $\gamma_0$ is not trivial, then the unitrack consisting of the curves $\gamma_n$, where $n\in\N$, has self-intersections. 
    \end{conjecture}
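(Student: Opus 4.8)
The plan is to derive, from the hypothesis, the estimate $\Len(\gamma_0)\Le 1$. Since $\gamma_0$ joins $(0,0)$ to $(1,0)$ one always has $\Len(\gamma_0)\Ge 1$, so this forces $\Len(\gamma_0)=1$; hence $\gamma_0$ is the horizontal unit segment and $f_0\eq 0$, and then $\dot\gamma_0/\|\dot\gamma_0\|\eq(1,0)$ gives $\vphi(\gamma_0)=\gamma_0$, so $f_n\eq 0$ for every $n$. Thus everything reduces to the length bound.

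Here is how I would set things up. Parametrise $\gamma_0$ by arc length $s\in[0,L]$, $L:=\Len(\gamma_0)$, and keep the same parameter along the family: $\gamma_n:=\vphi^n(\gamma_0)\colon[0,L]\to\R^2$ (for $n\Ge1$ the parameter $s$ is no longer arc length on $\gamma_n$). Write $\gamma_n(s)=(x_n(s),y_n(s))$ and $\sigma_n(s):=\|\dot\gamma_n(s)\|$. Because $\gamma_n$ is the graph of $f_n$ over $[0,1]$ and is immersed, $\dot\gamma_n=\dot x_n\cdot(1,f_n'(x_n))$ vanishes only where $\dot x_n$ does; hence $\dot x_n>0$ throughout, $x_n$ increases from $0$ to $1$, and the tangent direction $\theta_n(s):=\arctan\!\bigl(\dot y_n(s)/\dot x_n(s)\bigr)$ is a genuine smooth function with values in $(-\tfrac{\pi}{2},\tfrac{\pi}{2})$, with $\dot x_n=\sigma_n\cos\theta_n$. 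From formula~(\ref{formula}), $\gamma_{n+1}=\gamma_n+(\cos\theta_n,\sin\theta_n)-(1,0)$, I would compute $\dot\gamma_{n+1}=\sigma_n T_n+\dot\theta_n N_n$, with $T_n=(\cos\theta_n,\sin\theta_n)$ the unit tangent and $N_n$ the unit normal; this yields $\sigma_{n+1}=\sqrt{\sigma_n^2+\dot\theta_n^2}\Ge\sigma_n$ and $\theta_{n+1}=\theta_n+\Delta_n$, where $\Delta_n:=\arctan(\dot\theta_n/\sigma_n)\in(-\tfrac{\pi}{2},\tfrac{\pi}{2})$ and $\sigma_{n+1}=\sigma_n\sec\Delta_n$ — here the hypothesis that $\gamma_{n+1}$ is again a graph (so $\dot x_{n+1}>0$) is what guarantees this angle identity stays inside $(-\tfrac{\pi}{2},\tfrac{\pi}{2})$ rather than jumping a half-turn. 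Iterating gives $\sigma_n(s)=\prod_{k=0}^{n-1}\sec\Delta_k(s)$, and taking first coordinates, $x_n(s)=x_0(s)-\sum_{k=0}^{n-1}\bigl(1-\cos\theta_k(s)\bigr)$.

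The crux is then to exploit that the curves never leave the strip $0\Le x\Le 1$. From $x_n(s)\Ge 0$ we get $\sum_{k=0}^{\infty}(1-\cos\theta_k(s))\Le x_0(s)\Le 1$ for every $s$, so $\theta_k(s)\to 0$; since $1-\cos\theta\Ge\tfrac{4}{\pi^2}\theta^2$ on $(-\tfrac{\pi}{2},\tfrac{\pi}{2})$ this gives $\sum_k\theta_k(s)^2<\infty$, and as $\Delta_k=\theta_{k+1}-\theta_k$ we get $\sum_k\Delta_k(s)^2\Le 4\sum_k\theta_k(s)^2<\infty$. Because $|\Delta_k(s)|<\tfrac{\pi}{2}$ for all $k$ and $\Delta_k(s)\to 0$, the product $P(s):=\prod_{k=0}^{\infty}\sec\Delta_k(s)$ converges, $1\Le P(s)<\infty$, and $\sigma_n(s)\uparrow P(s)$. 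Hence $\dot x_n(s)=\sigma_n(s)\cos\theta_n(s)\to P(s)\Ge 1$ pointwise, while $\dot x_n\Ge 0$ and $\int_0^L\dot x_n(s)\,ds=x_n(L)-x_n(0)=1$ for every $n$. By Fatou's lemma, $\int_0^L P(s)\,ds\Le\liminf_n\int_0^L\dot x_n(s)\,ds=1$, and since $P\Ge 1$ this gives $L\Le 1$, which would complete the argument. (This is the ``limiting distribution'' step: the push-forwards to the $x$-axis of arc-length measure on $\gamma_0$ converge to a measure of total mass $L$ supported on $[0,1]$ with density $\Le 1$.)

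The part I expect to be the real work is the bookkeeping of the recursion in the second paragraph — showing $\theta_n$ is well-defined and smooth with values in $(-\tfrac{\pi}{2},\tfrac{\pi}{2})$, and that $\theta_{n+1}=\theta_n+\arctan(\dot\theta_n/\sigma_n)$ holds with no half-turn ambiguity, both of which genuinely require each $\gamma_n$ to be a graph over an interval of width exactly $1$ — together with justifying the passage to the limit in the last step (done here by Fatou; the measure-theoretic version is equivalent). The hypothesis is used at full strength: confinement to a strip of width $1$ is what bounds $\sum_k(1-\cos\theta_k(s))$, and being a graph keeps $\theta_k(s)\in(-\tfrac{\pi}{2},\tfrac{\pi}{2})$, which is exactly what makes $\sum_k\theta_k(s)^2$, hence $\sum_k\Delta_k(s)^2$, hence $\sigma_n$, stay under control.
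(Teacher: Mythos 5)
You have proved the wrong statement. The conjecture you were asked to address asserts that the \emph{unitrack} --- the union of all the curves $\gamma_n$ --- has a self-intersection whenever $\gamma_0$ is non-trivial; the paper presents this as an open conjecture (it is the ``weaker variation \dots that is still open''), and your proposal never engages with it: nowhere do you exhibit, or even discuss, a self-intersection. What you have written is instead an argument for Theorem \ref{graphs} (Theorem I): you \emph{assume} that every $\gamma_n$ is a graph of a function over $[0,1]$ and deduce $\Len(\gamma_0)\Le 1$, hence triviality. That hypothesis is not available here --- the conjecture assumes only that $\gamma_0$ is non-trivial --- and negating it does not give the conclusion: by Theorem \ref{graphs} some $\gamma_N$ eventually fails to be a graph, i.e.\ acquires a vertical tangency (Section \ref{proof2}), but a curve with a vertical tangency need not self-intersect, and whether the union of infinitely many such arcs crosses itself is a global question that your per-curve, pointwise-in-$s$ analysis never touches. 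So the gap is fundamental: the argument establishes a different theorem and cannot be repaired into a proof of this one without an entirely new idea.

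Separately, and for what it is worth, as a proof of Theorem \ref{graphs} your argument is a genuinely different and rather elegant route compared with the paper's. The paper works with the pointwise limit $L(t)=\lim_n x_n(t)$, copes with its possible discontinuities, and refines a length estimate $\Len(\gamma_0|_{[a,b]})\Le \delta/(1-\delta/m)$ over finer and finer partitions of $\im L$. You instead fix the arc-length parameter of $\gamma_0$, track the tangent angle $\theta_n$ and speed $\sigma_n$ through the recursion $\sigma_{n+1}=\sqrt{\sigma_n^2+\dot\theta_n^2}$, use confinement to the strip $0\Le x\Le 1$ to get $\sum_k\bigl(1-\cos\theta_k(s)\bigr)\Le 1$ and hence convergence of $\sigma_n(s)$ to some $P(s)\Ge 1$, and close with Fatou's lemma against $\int_0^L\dot x_n\,ds=1$. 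Modulo the bookkeeping you yourself flag (well-definedness of $\theta_n$ and the absence of a half-turn jump, which the graph hypothesis does supply), this is a clean alternative proof --- but of Theorem \ref{graphs}, not of the stated conjecture.
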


\section{Proof of Theorem \ref{graphs}}
\label{proof1}

    \subsection{Plan of the proof}
    \begin{itemize}
        \item We assume the opposite --- that there exists a non-trivial curve $\gamma_0(t)$, such that all the curves $\gamma_n(t)$ are graphs of smooth functions. In cartesian coordinates if we denote the horizontal and vertical components of a curve $\gamma_n(t)$ by $x_n(t)$ and $y_n(t)$ we assume that $y_n(t) = f_n(x_n(t))$. We will also introduce a parameter $t\in[0, 1]$ on the curves, such that is coincides with the horizontal coordinate on the first curve, and is pushed forward on the other curves by $\vphi$:
        $$
        \gamma_0(t) = (t, f_0(t));\quad \gamma_{n+1}(t) = \gamma_n(t) + \frac{\dot{\gamma}_n(t)}{\|\dot{\gamma}_n(t)\|} - (1, 0)\quad \text{for all} \,\,n\Ge 0.
        $$

        \item We introduce the function $s_n$ that traces the action of $\vphi$ on horizontal coordinates of points on the curves (see Figure \ref{pics_n}). This function $s_n$ relates the horizontal components of the curves in the following way:
        $$
        x_{n+1}(t) = x_n(t) - s_n(x_n(t)).
        $$

        \begin{figure}[hbt!]
            \centering
            \includegraphics[height=5cm]{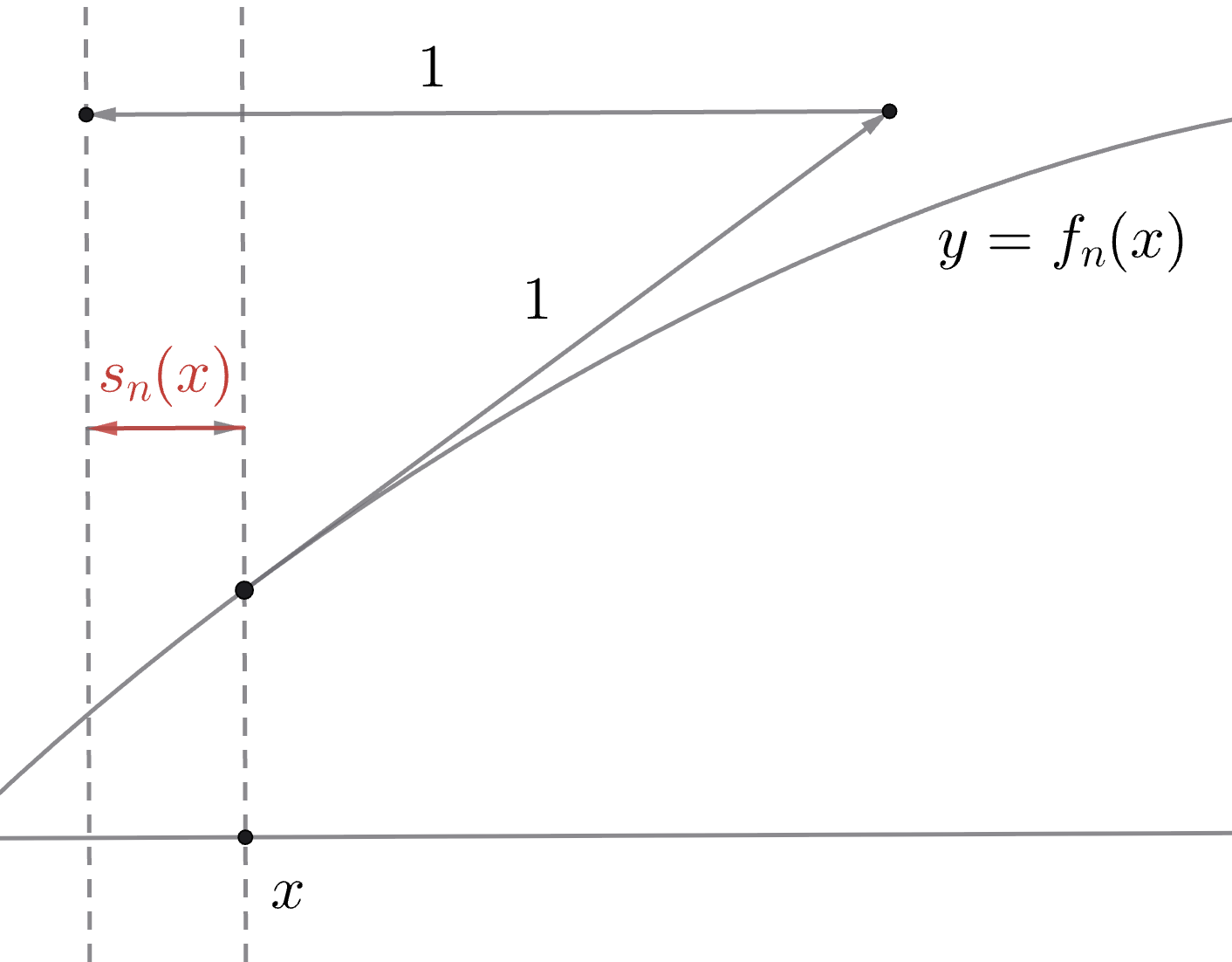}
            \caption{The definition of $s_n(x)$.}\label{pics_n}
        \end{figure}

        \item From the initial assumptions we analyze the sequence of functions $\{x_n\}$, and obtain a pointwise limit $L(t) = \lim\lims_{n\to\Inf}x_n(t)$, that inherits some, but not all of the properties of $x_n$. For example, $L$ is not necessarily continuous apriori, which causes some complications.\\

        \item Take a closed segment $[a, b]\subset[0, 1]$ and consider a very large $n\in\N$. The segment of the curve $\gamma_n$, defined by the parameter interval $[a, b]$, is "hanging above" a closed interval very close to $[L(a), L(b)]$. Denote its length by $\delta = L(b) - L(a)$. We obtain an upper bound on the slope of the function $f_n$ in terms of $\delta$. We do so by bounding the function $s_n$ from above by $\delta$. Using Lemma \ref{length}, we are able to obtain the estimate on $\gamma_0$, restricted to an interval $[a, b]$:
        $$
        \Len(\gamma_0|_{[a, b]})\Le \frac{\delta}{1 - \delta}
        $$

        \begin{figure}[hbt!]
            \label{est_simpl}
            \centering
            \includegraphics[height=2.5cm]{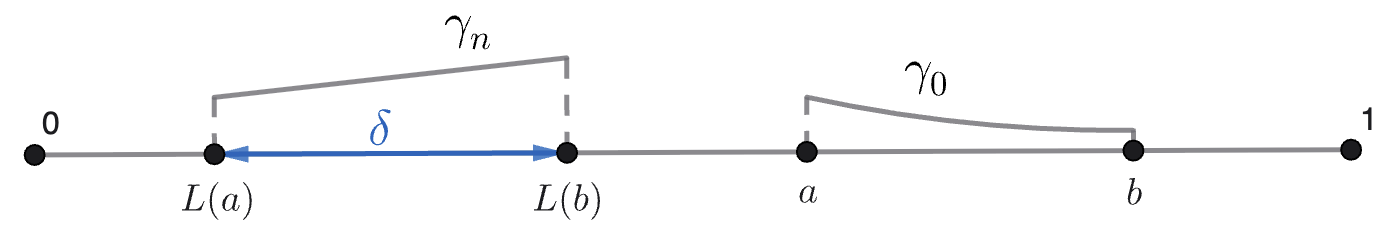}
            \caption{A sketch with notations for the proof of Theorem\ref{graphs}.}
        \end{figure}
        
        If $L$ was continuous, in fact uniformly continuous, we would be able to estimate $s_n$ from above by an arbitrarily small number. However, we cannot assume continuity of $L$, so we have to take an extra step.

        \item We partition the segment of $[L(a), L(b)]$ into $m$ equal intervals, and apply the previously obtained estimate to each of the smaller intervals. Since their length is $\frac{\delta}{m}$, the bound on the derivative of $\gamma_n$ for big $n$ is also significantly smaller. As a result, we obtain the following estimate:
        $$
        \Len(\gamma_0|_{[a, b]})\Le \frac{\delta}{1 - \frac{\delta}{m}}
        $$
        We apply this estimate for all positive integers $m$, and conclude that $\Len(\gamma_0|_{[a, b]})\Le \delta$.

        \item It follows from injectivity and other properties of $L$ that its image is a collection of intervals. Because of the estimate above the length of $\gamma_0$ is no greater than the sum of lengths of those intervals. Therefore the length of $\gamma_0$ is not greater than $1$. But the curve connecting $(0, 0)$ and $(1, 0)$ has length at least $1$ with equality only in the case of a straight segment. We conclude that $\gamma_0$ is in fact trivial.
    \end{itemize}

    \subsection{Supplementary definitions}\label{subsec5.1}
    Assume that a fixed curve $\gamma_0$ indeed remains a graph of a function under the dynamics of the map $\vphi\colon Y\to Y$. Denote by $f_n\colon [0, 1]\to\R$ the function, whose graph $\gamma_n$ is, so in the notations of Section \ref{sec4} we have: 
    
    $$
        \gamma_n(t) = \lt(x_n(t), f(x_n(t))\rt) = \lt(x_n(t), y_n(t)\rt).
    $$
    \\
    Let us define a single parameter $t$ on all the curves $\gamma_n$, in the following way:
    \begin{itemize}
        \item $\gamma_0(t) = (t, f_0(t))$. This is a well-defined parameter, since $\gamma_0$ is a graph of the function $f_0$;
    
        \item $\gamma_{n+1}(t)$ is defined inductively by the following formula:
        $$
        \gamma_{n+1}(t) = \gamma_n(t) + \frac{\dot{\gamma}_n(t)}{\|\dot{\gamma}_n(t)\|}-(1, 0).
        $$
    \end{itemize}
    This way the parameter $t$ "carries the memory" of the initial horizontal position of the point $\gamma_0(t)$.\\
    
    For the proof we will need to keep track of the horizontal movement of each point throughout the dynamics. Recall that for a curve $\gamma_n(t)$ we denote by $x_n(t)$ and $y_n(t)$ its horizontal and vertical coordinates respectively as functions of the parameter $t$. We introduce a sequence of functions $s_n\colon [0, 1]\to[0, 2]$, such that:
    \begin{align*}
        s_{n}(x) = 1 - \cos{\arctan{f_n'(x)}} = 1 - \frac{\dot{x}_n}{\sqrt{{\dot{x}_n^2 + \dot{y}_n^2}}},
    \end{align*}
    where the $'$ stands for $\frac{d}{dx}$, and $\dot{}$ stands for $\frac{d}{dt}$,
    
    \begin{figure}[hbt!]
        \centering
        \includegraphics[height=5cm]{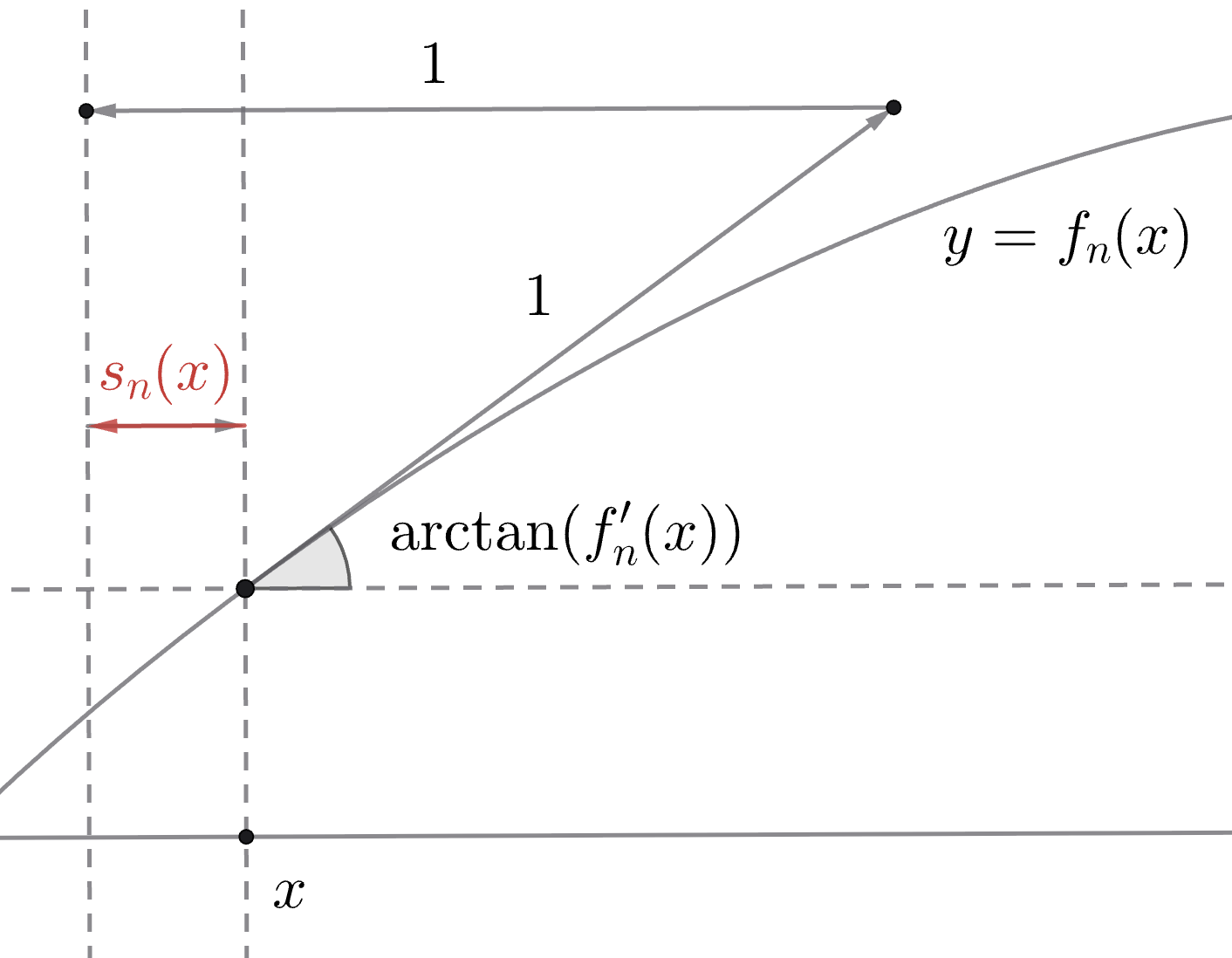}
        \caption{The definition of $s_n$.}
        \label{pics_nadv}
    \end{figure}
    
    \textit{Remark}: Since $f_n$ is infinitely differentiable and $\cos$ and $\arctan$ are smooth functions, we conclude that $s_n$ is infinitely differentiable as well.\\
    
    The intuition behind the functions $s_n$ comes from the following observation. 
    \begin{prop}
    \label{s_n}
        $x_{n+1}(t) = x_n(t) - s_n(x_n(t))$.
    \end{prop}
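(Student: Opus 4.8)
The plan is to read the statement off the defining recursion by taking horizontal components, after which only an elementary trigonometric identity remains. Writing $\gamma_n(t)=(x_n(t),y_n(t))$ and taking the first coordinate of
$$
\gamma_{n+1}(t)=\gamma_n(t)+\frac{\dot\gamma_n(t)}{\|\dot\gamma_n(t)\|}-(1,0),
$$
one gets $x_{n+1}(t)=x_n(t)+\dfrac{\dot x_n(t)}{\|\dot\gamma_n(t)\|}-1$. Comparing this with the claimed formula $x_{n+1}(t)=x_n(t)-s_n(x_n(t))$ and with the definition $s_n(x)=1-\cos\bigl(\arctan f_n'(x)\bigr)$, it therefore suffices to prove the pointwise identity
$$
\frac{\dot x_n(t)}{\|\dot\gamma_n(t)\|}=\cos\bigl(\arctan f_n'(x_n(t))\bigr).
$$

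Next I would unwind both sides. Differentiating the hypothesis $y_n(t)=f_n(x_n(t))$ by the chain rule gives $\dot y_n(t)=f_n'(x_n(t))\,\dot x_n(t)$, so that
$$
\|\dot\gamma_n(t)\|=\sqrt{\dot x_n(t)^2+\dot y_n(t)^2}=|\dot x_n(t)|\,\sqrt{1+f_n'(x_n(t))^2}.
$$
On the other side, for any real $p$ one has $\cos(\arctan p)=1/\sqrt{1+p^2}$. Hence the required identity reduces to $\dot x_n(t)/|\dot x_n(t)|=1$, i.e. to the positivity of $\dot x_n$.

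The only point that needs care — and the sole real obstacle — is thus establishing $\dot x_n(t)>0$ for all $t$. For $n=0$ this is immediate since $\gamma_0(t)=(t,f_0(t))$, so $\dot x_0\equiv 1$. For $n\Ge 1$, the identity $\dot y_n=f_n'(x_n)\,\dot x_n$ shows that a zero of $\dot x_n$ would also be a zero of $\dot y_n$, contradicting the fact that $\gamma_n$ is immersed; hence $\dot x_n$ never vanishes, and being continuous on an interval it has constant sign. Since $\gamma_n\in Y$, its tangent at the left endpoint is a horizontal vector oriented to the right, so $\dot x_n$ is positive there, and therefore positive throughout. (One should also check that the inductively defined parameter $t$ is regular on $\gamma_n$; this holds because, in the arc-length parameter of $\gamma_n$, the vector $\dot\gamma_{n+1}$ equals the unit tangent of $\gamma_n$ plus its curvature vector, and these are orthogonal, so their sum never vanishes — this is exactly the computation already used in Lemma \ref{length}.) Substituting $\dot x_n/|\dot x_n|=1$ back into the displayed identity closes the argument.
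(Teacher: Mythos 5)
Your proof is correct and follows the same route as the paper: equate the horizontal components of the recursion $\gamma_{n+1}=\gamma_n+\dot\gamma_n/\|\dot\gamma_n\|-(1,0)$ and identify $\dot x_n/\|\dot\gamma_n\|$ with $\cos\bigl(\arctan f_n'(x_n)\bigr)$. The only difference is that you carefully justify $\dot x_n>0$ (via immersedness and the endpoint condition), a point the paper treats as evident from the figure, so your write-up is a slightly more rigorous version of the same argument.
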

    
    \begin{proof}[Proof]
        As it is evident from the Figure \ref{pics_nadv}, $\cos\arctan{f'_n(x)}$ is the horizontal component of a unit tangent vector to $\gamma_n$ at the point $\lt(x(t), f_n(x(t))\rt)$ and it tracks the horizontal movement of a point with parameter $t$.\\
    
        Recall, that $\gamma_{n+1}(t) = \gamma_n(t) + \frac{\dot{\gamma_n}(t)}{\|\dot{\gamma_n}(t)\|} - (1, 0)$. We can equate the horizontal components of the left and the right sides of the latter formula to relate the functions $x_n(t)$ and $x_{n+1}(t)$:
        \begin{align*}
            x_{n+1}(t) = x_n(t) + \cos\arctan{f'_n(x_n(t))} - 1 = x_n(t) - s_n(x_n(t)),
        \end{align*}
        as wanted.
    \end{proof}

\subsection{Projection on horizontal direction.}

    \begin{prop}\label{x_nprop}
        The functions $x_n$ satisfy the following properties:
        \begin{enumerate}
            \item $x_n\in C^\Inf\lt([0, 1]\rt)$.\\
        
            \item $x_n(0) = 0,\,\, x_n(1) = 1$.\\
        
            \item For all $n\in\N$ and for all $0 \Le t_1 < t_2\Le 1$, the inequality $x_n(t_1)\Le x_n(t_2)$ holds.\\
        
            \item For all $n\in\N$ and for any fixed parameter value $t\in[0,1]$ we have $x_n(t)\Ge x_{n+1}(t)$.\\
        
        \end{enumerate}
    \end{prop}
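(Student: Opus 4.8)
The plan is to prove properties 1--3 by a single induction on $n$, reading off property 4 along the way directly from Proposition \ref{s_n}. The base case $n=0$ is immediate, since $x_0(t)=t$. Before running the induction I would record two preliminary facts. First, each $\gamma_n$ is an immersion with respect to the parameter $t$: this is clear for $\gamma_0(t)=(t,f_0(t))$, and if $\gamma_n$ is immersed in $t$ then, reparametrizing it by arc length (an orientation-preserving reparametrization, and $\vphi$ commutes with such reparametrizations because the normalized velocity $\dot{\gamma}/\|\dot{\gamma}\|$ is reparametrization-invariant), the computation in the proof of Lemma \ref{length} gives $\|\dot{\vphi(\gamma_n)}\|=\sqrt{1+k^2}\Ge 1$, so $\gamma_{n+1}=\vphi(\gamma_n)$ is again immersed. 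Second, since $\gamma_n\in Y$ is the graph of $f_n\colon[0,1]\to\R$ running from $(0,0)$ to $(1,0)$ with horizontal unit tangents at both endpoints, we get $f_n(0)=f_n(1)=0$ and $f_n'(0)=f_n'(1)=0$, hence $s_n(0)=s_n(1)=0$, while $s_n(x)=1-\cos\arctan f_n'(x)$ lies in $[0,1)$ for every $x$.

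For the inductive step I would assume $x_n\in C^\infty([0,1])$ is non-decreasing with $x_n(0)=0$ and $x_n(1)=1$, so $x_n([0,1])\subseteq[0,1]$. By Proposition \ref{s_n} one has $x_{n+1}=x_n-s_n\circ x_n$; since $s_n$ is smooth (the Remark before Proposition \ref{s_n}), $x_{n+1}\in C^\infty([0,1])$ (property 1), and $x_{n+1}(0)=0-s_n(0)=0$, $x_{n+1}(1)=1-s_n(1)=1$ (property 2). Likewise $x_{n+1}(t)=x_n(t)-s_n(x_n(t))\Le x_n(t)$ because $s_n\Ge 0$, which is property 4.

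The one point that is not bookkeeping is monotonicity (property 3), and this is where I expect the real content to sit. Here I would exploit the standing hypothesis that $\gamma_{n+1}$ is again a graph: writing $\gamma_{n+1}(t)=\bigl(x_{n+1}(t),\,f_{n+1}(x_{n+1}(t))\bigr)$ gives $\dot{\gamma}_{n+1}(t)=\dot{x}_{n+1}(t)\cdot\bigl(1,\,f_{n+1}'(x_{n+1}(t))\bigr)$, and since the vector $(1,f_{n+1}'(\cdot))$ never vanishes, immersivity of $\gamma_{n+1}$ forces $\dot{x}_{n+1}(t)\ne 0$ for all $t\in[0,1]$. A continuous, nowhere-vanishing function has constant sign, and $\int_0^1\dot{x}_{n+1}=x_{n+1}(1)-x_{n+1}(0)=1>0$ pins that sign to be positive; hence $x_{n+1}$ is a strictly increasing diffeomorphism of $[0,1]$ onto itself --- more than property 3 requires --- and the induction closes.

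I expect the main obstacle to be precisely this monotonicity step: it is genuinely not a formal consequence of the dynamics of $\vphi$ --- in general $\vphi$ makes curves turn back on themselves, which is the whole point of the rest of the paper --- and it rests on the hypothesis of Theorem \ref{graphs} that every $\gamma_{n+1}$ stays a graph, used through the factorization $\dot{\gamma}_{n+1}=\dot{x}_{n+1}\cdot(1,f_{n+1}')$. The only other thing to handle with some care is the first preliminary fact, i.e.\ verifying that $\vphi$ is compatible with orientation-preserving reparametrizations so that ``immersed in $t$'' genuinely propagates along the sequence $\gamma_0,\gamma_1,\gamma_2,\dots$.
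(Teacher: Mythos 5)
Your proof is correct and follows the same route as the paper, which simply asserts that properties 1, 2, and 4 are immediate from Proposition \ref{s_n} together with $s_n\Ge 0$ and $s_n(0)=s_n(1)=0$, and that property 3 is immediate from $\gamma_n$ being (the graph of $f_n$, hence) an immersed graph. Your write-up merely fills in the details the paper leaves implicit --- in particular the sign argument for $\dot{x}_{n+1}$ and the propagation of immersivity under $\vphi$ --- and both of those are handled correctly.
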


    \begin{proof}
        Properties $1.$ through $3.$ follow immediately from the fact that $x_n$ is the horizontal projection of $\gamma_n$, which is a graph of $f_n$, and from Proposition \ref{s_n} and properties of the function $s_n$. Property $4.$ follows from Proposition \ref{s_n} as well, since $s_n$ is a positive function by definition.
    \end{proof}
    
    Also, these properties imply the following corollary:
    \begin{corollary}
    \label{limit}
        There exists a pointwise limit $\lim\lims_{n\to\Inf}x_n(t) = L\colon [0, 1]\to[0, 1]$.
        Moreover, we have $L(0) = 0,\, L(1) = 1$ and $L$ is non-decreasing.
    \end{corollary}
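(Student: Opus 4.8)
The plan is to apply, for each fixed parameter value, the monotone convergence theorem for real sequences; the statement is an immediate consequence of Proposition \ref{x_nprop}, and essentially no new idea is needed.

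First I would fix $t\in[0,1]$ and examine the sequence of real numbers $\bigl(x_n(t)\bigr)_{n\Ge 0}$. By item $4$ of Proposition \ref{x_nprop} this sequence is non-increasing, and by item $3$ (applied with the pair $0 < t$) together with item $2$ we get $x_n(t)\Ge x_n(0)=0$ for every $n$, so the sequence is bounded below. A non-increasing sequence that is bounded below converges, so the pointwise limit $L(t):=\lim_{n\to\Inf}x_n(t)$ exists. Since in addition $x_n(t)\Le x_n(1)=1$ for all $n$ (again by items $3$ and $2$), the limit lies in $[0,1]$, and this defines $L\colon[0,1]\to[0,1]$.

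It remains to read off the three extra assertions. For the endpoints the sequences are constant: $x_n(0)=0$ and $x_n(1)=1$ for every $n$ by item $2$, hence $L(0)=0$ and $L(1)=1$. For monotonicity, take $0\Le t_1<t_2\Le 1$; item $3$ gives $x_n(t_1)\Le x_n(t_2)$ for every $n$, and passing to the limit preserves the (non-strict) inequality, so $L(t_1)\Le L(t_2)$.

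There is no genuine obstacle in this proof — all the substantive work is already packaged into Proposition \ref{x_nprop}. The single point worth flagging is that the convergence obtained here is only pointwise, and that no continuity (let alone uniform continuity) of $L$ is claimed; as the plan of the proof of Theorem \ref{graphs} indicates, this is not an oversight but precisely the reason a naive uniform-continuity estimate has to be replaced later by the partition-into-$m$-pieces device.
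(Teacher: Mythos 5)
Your proof is correct and follows essentially the same route as the paper's: the pointwise limit comes from monotone convergence of the non-increasing, bounded-below sequence $x_n(t)$, the endpoint values come from the constancy of $x_n(0)$ and $x_n(1)$, and monotonicity of $L$ comes from passing to the limit in $x_n(t_1)\Le x_n(t_2)$. If anything, your justification of the lower bound and of the monotonicity of $L$ is slightly more explicit than the paper's.
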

    \begin{proof}[Proof]
        For every fixed $t\in[0, 1]$ the sequence $x_n(t)$ is decreasing and bounded from below by $0$. Therefore for each $t\in[0, 1]$ there exists a limit $L(t) = \lim\lims_{n\to\Inf}x_n(t)$. It will be non-decreasing as a limit of continuous increasing functions.\\
        Since for $t = 0$ or $t=1$ the sequence $x_n(t)$ is constant and equal to $0$ and $1$ respectively, we conclude that $L(0) = 0$ and $L(1) = 1$.
    \end{proof}
    
    Monotonicity of $L$ suggests that it only can have type-one discontinuities, i.e., left and right limits of $L$ exist at all points, but they may not be equal to each other in a no more than countable set of points. In particular, the set $S = \im L$ is a no more than a countable collection of intervals.\\

\subsection{Estimating the length of segments of $\gamma_0$}

    The plan for the proof is to make an initial estimate on the length of certain segments of $\gamma_0$. Then we will refine those estimates, and conclude that $\gamma_0$ is no longer than $1$.\\
    
    Let us fix an arbitrary closed interval $[c, d]\subset S$, and denote by $\delta = d - c$ the length of this interval. Since $[c, d]\subset S$, we can define a non-empty interval $I_C^D = L^{-1}([c, d]) \subset [0, 1]$. It is indeed an interval, since $L$ is monotonous, $[c, d]\subset S = \im L$, and there are no discontinuities between $L^{-1}(c)$ and $L^{-1}(d)$.\\
    
    \textit{Remark}: The notation $I_C^D$ means that it is an interval between $C$ and $D > C$, but we do not specify whether the ends are open or closed. Apriori all four combinations of open and closed ends are possible in this case.\\
    
    Let's fix a closed interval $[a, b]\subset I_C^D$.\\
    
    Recall that $L = \lim\lims_{n\to\Inf}x_n$ is the limit of a decreasing sequence of functions, so for any $\veps > 0$ there exists $N\in\N$, such that for all $n > N$ the following holds:
    \begin{align}\label{coroll_ineq}
        c \Le L(a) < L(b) \Le x_n(b) < d + \veps.
    \end{align}

    \begin{figure}[hbt!]
            \label{config}
            \centering
            \includegraphics[height=2.5cm]{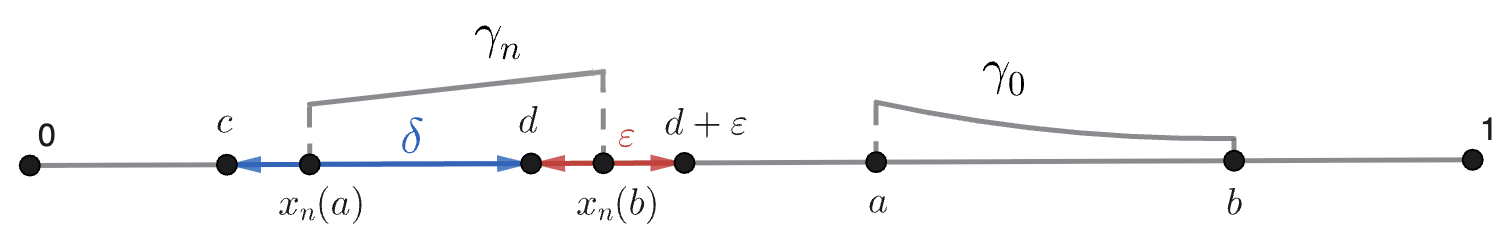}
            \caption{The configuration of points for large $n$.}
        \end{figure}

    There are two corollaries that follow from this simple observation:
    \begin{corollary}
    \label{cor1}
        In the above notations:
        $$
        x_n(b) - x_n(a) < \delta + \veps.
        $$
    \end{corollary}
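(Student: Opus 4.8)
The plan is simply to extract the desired bound from the two ends of the chain of inequalities (\ref{coroll_ineq}), once a lower bound for $x_n(a)$ is supplied. First I would read off the right-hand end of (\ref{coroll_ineq}): for every $n > N$ we already have $x_n(b) < d + \veps$. It therefore remains only to bound $x_n(a)$ from below.

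For the lower bound, note that $[a, b]\subset I_C^D = L^{-1}([c, d])$, so by the very definition of $I_C^D$ we have $L(a)\in[c, d]$, in particular $L(a)\Ge c$. On the other hand, by Proposition \ref{x_nprop}(4) the sequence $\{x_n(a)\}_n$ is non-increasing, hence every term dominates its pointwise limit from Corollary \ref{limit}, i.e. $x_n(a)\Ge L(a)$ for all $n$. Combining the two gives $x_n(a)\Ge c$. Subtracting this from $x_n(b) < d+\veps$ yields, for all $n > N$,
$$
x_n(b) - x_n(a) < (d + \veps) - c = \delta + \veps,
$$
which is exactly the claim.

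I do not expect any genuine obstacle here: the statement is a one-line consequence of monotone convergence together with the definition of $I_C^D$. The only two things to keep straight are that one must use the monotonicity in the correct direction (each $x_n$ lies \emph{above} $L$, since the sequence decreases to $L$), and that it is the membership $a\in I_C^D$ — not any continuity of $L$ — that pins $L(a)$ into $[c, d]$, so the argument is insensitive to whether the endpoints of $I_C^D$ are open or closed.
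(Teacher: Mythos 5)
Your proposal is correct and is essentially the paper's own argument: the paper likewise combines $x_n(b) < d+\veps$ from (\ref{coroll_ineq}) with the lower bound $x_n(a)\Ge L(a)\Ge c$ coming from the monotone decrease of $x_n$ to $L$ and the membership $a\in I_C^D$. You have merely spelled out the details the paper leaves implicit.
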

    \begin{proof}
        Indeed, the monotonicity of the sequence $(x_n)$ and formula (\ref{coroll_ineq}) imply that:
        $$
        x_n(b) - x_n(a) < d + \veps - c = \delta + \veps.
        $$
    \end{proof}
    
    \begin{corollary}
    \label{cor2}
        In the above notations for all $x\in [x_n(a), x_n(b)]$ and for all $n > N$ the following holds:
        $$
        s_n(x) < d + \veps - c = \delta + \veps.
        $$
    \end{corollary}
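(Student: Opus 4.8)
The plan is to reduce the desired bound on $s_n(x)$ to the bound on the horizontal displacement of a single point that is already implicit in (\ref{coroll_ineq}). The key link is Proposition \ref{s_n}: for any parameter value $t$ one has $x_{n+1}(t) = x_n(t) - s_n(x_n(t))$, so $s_n$ evaluated at the horizontal coordinate of a point measures exactly how far that point slides to the left under one application of $\vphi$.

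First I would observe that every $x \in [x_n(a), x_n(b)]$ is of the form $x = x_n(t)$ for some $t \in [a, b]$. This is immediate from Proposition \ref{x_nprop}: $x_n$ is continuous (part 1) and non-decreasing (part 3), so it maps $[a, b]$ onto $[x_n(a), x_n(b)]$. Fixing such a $t$, Proposition \ref{s_n} gives $s_n(x) = s_n(x_n(t)) = x_n(t) - x_{n+1}(t)$, and it remains only to bound the right-hand side from above.

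For the upper bound on $x_n(t)$: since $t \Le b$ and $x_n$ is non-decreasing, $x_n(t) \Le x_n(b)$, and by (\ref{coroll_ineq}) we have $x_n(b) < d + \veps$ for $n > N$. For the lower bound on $x_{n+1}(t)$: the sequence $(x_m(t))_m$ is decreasing with limit $L(t)$, hence $x_{n+1}(t) \Ge L(t)$; and since $L$ is non-decreasing (Corollary \ref{limit}) and $t \Ge a$, we get $L(t) \Ge L(a) \Ge c$ by (\ref{coroll_ineq}). Combining the two estimates, $s_n(x) = x_n(t) - x_{n+1}(t) < (d + \veps) - c = \delta + \veps$ for all $n > N$, as claimed.

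I do not expect a real obstacle here; the only points requiring care are that the displacement $s_n(x)$ must be read off along the \emph{same} parameter value $t$ for which $x = x_n(t)$, and that the upper bound on $x_n(t)$ and the lower bound on $x_{n+1}(t)$ hold simultaneously for all $n > N$ — both facts follow directly from the monotonicity of $x_n$ in $t$, the monotonicity of the sequence $(x_n)$ in $n$, and the chain of inequalities (\ref{coroll_ineq}).
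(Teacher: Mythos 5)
Your proof is correct and follows essentially the same route as the paper: both bound $s_n(x_n(t)) = x_n(t) - x_{n+1}(t)$ from above by $x_n(t) - L(t) \Le (d+\veps) - c$ using the chain (\ref{coroll_ineq}) and the monotonicity of $x_n$ and $L$. If anything, you are slightly more careful than the paper in explicitly noting that every $x \in [x_n(a), x_n(b)]$ is realized as $x_n(t)$ for some $t \in [a,b]$, which is what the statement as written requires.
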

    \begin{proof}[Proof]
        Recall, that the function $s_n(x) = 1 - \cos\arctan{f_n'(x)}$ tracks how much $\vphi$ moves each point to the left.\\
        The sequence $x_n(t)$ is a decreasing sequence with limit $L(t)$. From Proposition \ref{s_n} also follows that $L(t) = t - \sum\lims_{k=0}^\Inf s_k(x_k(t))$  with non-negative terms $s_n$. For $t\in[a, b]$ we have 
        $$
        L(a) \Le L(t) \Le x_n(t)\Le d + \veps, \quad\text{therefore}\quad s_n(x_n(t))\Le x_n(t) - L(t)\Le \delta + \veps.
        $$
    \end{proof}
    
    We will use Corollaries \ref{cor1} and \ref{cor2} to approximate the length of some segments of $\gamma_0$. To do that we will need the following obvious lemma:\\
    \begin{lemma}
    \label{length-est}
        Consider a smooth function $h\colon [p, q] \to \R$ with a bounded derivative $|h'(x)| < c$. Then the length of its graph $\Gamma_h$ satisfies the following inequality:
        $$
        Len(\Gamma_h) < |q - p|\cdot \lt(\cos{\arctan{c}}\rt)^{-1} = |q - p|\cdot\sqrt{c^2 + 1}.
        $$
    \end{lemma}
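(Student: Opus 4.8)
The plan is to reduce the statement to the standard arc-length formula and a pointwise bound on the integrand; the lemma is purely computational, so the ``proof'' is really just bookkeeping, and one should not expect any genuine obstacle. Throughout I assume $p<q$ (otherwise $|q-p|=0$ and the claim is vacuous, or one simply relabels the endpoints).

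First I would recall that since $h\colon[p,q]\to\R$ is smooth, its graph $\Gamma_h$ is the image of the immersed curve $x\mapsto(x,h(x))$, whose speed at the point with abscissa $x$ equals $\sqrt{1+h'(x)^2}$. Hence
$$
\Len(\Gamma_h)=\int_p^q\sqrt{1+h'(x)^2}\,dx.
$$

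Next I would invoke the hypothesis $|h'(x)|<c$, valid for every $x\in[p,q]$, to bound the integrand pointwise by the constant $\sqrt{1+c^2}$, i.e. $\sqrt{1+h'(x)^2}<\sqrt{1+c^2}$ for all $x\in[p,q]$. Integrating this inequality over an interval of positive length $q-p$ — where strictness is preserved because the integrand is continuous and everywhere strictly below the constant bound — gives $\Len(\Gamma_h)<(q-p)\sqrt{1+c^2}=|q-p|\sqrt{c^2+1}$.

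Finally I would identify the two forms of the bound. Putting $\theta=\arctan c\in(-\pi/2,\pi/2)$, we have $\tan\theta=c$ and therefore $\cos\theta=(1+\tan^2\theta)^{-1/2}=(1+c^2)^{-1/2}$, so $(\cos\arctan c)^{-1}=\sqrt{c^2+1}$, which matches the expression in the statement and completes the argument. The only point deserving a word of care is the preservation of the strict inequality under integration, and this is immediate here since $[p,q]$ is a nondegenerate interval; there is no real difficulty, consistent with the authors labelling this an ``obvious lemma''.
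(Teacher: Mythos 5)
Your proof is correct: it is the standard arc-length computation $\Len(\Gamma_h)=\int_p^q\sqrt{1+h'(x)^2}\,dx$ with a pointwise bound on the integrand and the identity $(\cos\arctan c)^{-1}=\sqrt{c^2+1}$, and you rightly note that strictness survives integration over a nondegenerate interval. The paper itself offers no proof (it calls the lemma obvious), and your argument is exactly the one it implicitly relies on.
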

    
    Note, that the bound on the absolute value of the derivative $f_n'$ is equivalent to a certain bound on $-\cos\arctan f_n'$, since the function $\cos\arctan(s)$ has a global maximum at zero, and it is monotonously increasing on $(-\Inf, 0)$, and monotonously increasing on $(0, +\Inf)$. In particular, the equivalent form of the same statement is as follows:
    \begin{lemma}
    \label{impr-length-est}
        Consider a function $h\colon [p, q] \to \R$ such that $\cos\arctan h' > C$. Then the length of its graph $\Gamma_h$ satisfies the following inequality:
        $$
        Len(\Gamma_h) < \frac{|q-p|}{C}.
        $$
    \end{lemma}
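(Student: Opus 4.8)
The plan is to reduce the claim to the standard arc-length integral together with a single pointwise estimate. First I would recall the elementary identity
$$
\cos\arctan s = \frac{1}{\sqrt{1+s^2}}, \qquad s\in\R,
$$
which is precisely the reciprocal of the integrand appearing in the length formula. Hence the hypothesis $\cos\arctan h'(x) > C$ is, at every point $x\in[p,q]$, equivalent to $\sqrt{1+h'(x)^2} < 1/C$. In particular this forces $C>0$: otherwise $|q-p|/C$ would be non-positive while $\Len(\Gamma_h)\Ge 0$, so the asserted inequality would be false or vacuous, and I would state this positivity as a standing assumption (as is implicit in the way the lemma is used, where $C\in(0,1]$).

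Next I would write the length of the graph as $\Len(\Gamma_h) = \int_p^q \sqrt{1+h'(x)^2}\,dx$, which is legitimate since $h$ is at least $C^1$ on $[p,q]$ (the hypothesis already refers to $h'$). Substituting the pointwise bound under the integral sign gives $\Len(\Gamma_h) < \int_p^q C^{-1}\,dx = |q-p|/C$, which is exactly the assertion. The inequality is strict as soon as the interval is non-degenerate, i.e. $p\ne q$; in the degenerate case $p=q$ the graph is a single point and the statement is trivial.

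I do not expect a genuine obstacle here — the only points requiring care are (i) recording that $C$ must be positive for the statement to have content, and (ii) checking that $h$ is differentiable enough to apply the arc-length formula, which is built into the hypothesis. For completeness I would also note that Lemma \ref{length-est} is the same statement rewritten with a bound on $|h'|$: since $\cos\arctan$ is even and strictly decreasing in $|s|$, the condition $|h'|<c$ is equivalent to $\cos\arctan h' > C$ with $C=\cos\arctan c = (c^2+1)^{-1/2}$, and then $|q-p|/C = |q-p|\sqrt{c^2+1}$, so the two lemmas are indeed interchangeable.
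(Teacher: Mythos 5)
Your proof is correct and is exactly the standard argument the paper has in mind (the paper labels the companion Lemma \ref{length-est} ``obvious'' and omits the proof entirely): the identity $\cos\arctan s = (1+s^2)^{-1/2}$ turns the hypothesis into a pointwise bound on the arc-length integrand, and integration gives the claim. Your side remarks on the necessity of $C>0$ and the equivalence with Lemma \ref{length-est} are accurate and match the paper's own note connecting the two lemmas.
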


    For any $[c, d]\subset S = \im L$ recall that $\delta = d - c$ and $I_C^D = L^{-1}([c, d])$.
    \begin{prop}
    \label{est1}
        For any closed interval $[a, b]\subset I_C^D$ the following inequality holds:
        $$
        Len\lt(\gamma_0|_{[a, b]}\rt) \Le \frac{\delta}{1 - \delta}.
        $$
    \end{prop}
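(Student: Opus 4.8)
The plan is to apply Corollary~\ref{cor2} together with Lemma~\ref{impr-length-est} on the interval $[a,b]$, and then push $\veps \to 0$. Fix $\veps > 0$ small enough that $\delta + \veps < 1$, and choose $N$ so that (\ref{coroll_ineq}) holds for all $n > N$. By Corollary~\ref{cor2}, for every $x \in [x_n(a), x_n(b)]$ and every $n > N$ we have $s_n(x) < \delta + \veps$, i.e. $1 - \cos\arctan f_n'(x) < \delta + \veps$, which rearranges to
$$
\cos\arctan f_n'(x) > 1 - \delta - \veps > 0 \qquad \text{for all } x \in [x_n(a), x_n(b)].
$$
Since $\gamma_n$ restricted to the parameter interval $[a,b]$ is exactly the graph of $f_n$ over the $x$-interval $[x_n(a), x_n(b)]$, Lemma~\ref{impr-length-est} (with $C = 1 - \delta - \veps$ and $|q - p| = x_n(b) - x_n(a)$) gives
$$
\Len\!\left(\gamma_n|_{[a,b]}\right) < \frac{x_n(b) - x_n(a)}{1 - \delta - \veps} < \frac{\delta + \veps}{1 - \delta - \veps},
$$
where the last step uses Corollary~\ref{cor1}.

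Now I invoke Lemma~\ref{length}: since $\gamma_{n} = \vphi^n(\gamma_0)$ and $\vphi$ does not decrease length, and since this is equally true for the restriction to a fixed parameter subinterval $[a,b]$ (the transformation $\vphi$ acts pointwise in the parameter, so $\vphi(\gamma)|_{[a,b]} = \vphi(\gamma|_{[a,b]})$, and the translation by $-(1,0)$ does not affect length), we get $\Len(\gamma_0|_{[a,b]}) \le \Len(\gamma_n|_{[a,b]})$ for every $n$. Combining,
$$
\Len\!\left(\gamma_0|_{[a,b]}\right) < \frac{\delta + \veps}{1 - \delta - \veps}.
$$
This holds for every $\veps > 0$ with $\delta + \veps < 1$; letting $\veps \to 0$ yields $\Len(\gamma_0|_{[a,b]}) \le \frac{\delta}{1-\delta}$, which is the claimed inequality.

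\textbf{Main obstacle.} The one point needing care is the compatibility of the length-monotonicity of $\vphi$ with restriction to a subinterval: Lemma~\ref{length} is stated for curves parametrized on their whole domain, and here I want it for the piece $\gamma_n|_{[a,b]}$. The resolution is that $\vphi$ is a local (pointwise-in-$t$) operation — $\vphi(\gamma)(t)$ depends only on $\gamma(t)$ and $\dot\gamma(t)$ — so restricting the parameter to $[a,b]$ and then applying $\vphi$ gives the same curve as applying $\vphi$ and then restricting; hence the proof of Lemma~\ref{length} applies verbatim with the integral taken only over the arc-length image of $[a,b]$. A minor secondary point is that one should confirm the $x$-interval over which $f_n$ is being measured is precisely $[x_n(a), x_n(b)]$ and that $x_n$ is monotone on $[a,b]$ so that the graph parametrization is legitimate — but this is exactly Proposition~\ref{x_nprop}, items 1–3. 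Everything else is bookkeeping with the inequalities already established in Corollaries~\ref{cor1} and~\ref{cor2}.
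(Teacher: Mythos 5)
Your proposal is correct and follows essentially the same route as the paper's own proof: Corollary~\ref{cor2} plus Lemma~\ref{impr-length-est} to bound $\Len(\gamma_n|_{[a,b]})$ by $\frac{\delta+\veps}{1-\delta-\veps}$, then Lemma~\ref{length} (applied to the restriction, justified by the pointwise-in-$t$ nature of $\vphi$) and $\veps\to 0$. Your explicit remark on why length-monotonicity survives restriction to the parameter subinterval is a point the paper leaves implicit, and it is handled correctly.
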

    
    \begin{proof}[Proof]
        We can apply Lemma \ref{impr-length-est} to: 
        $$
        h = f_n\colon [x_n(a), x_n(b)]\to\R \,\,\text{with}\,\, \cos{\arctan{f_n'}} = 1 - s_n > 1 - \delta - \veps,
        $$
        and obtain that 
        $$
        Len(\gamma_n|_{[a, b]})\Le |\delta + \veps|\cdot \frac{1}{1 - \delta - \veps}
        $$
    
        According to Lemma \ref{length} this inequality yields $Len(\gamma_0|_{[a, b]}) \Le \frac{\delta + \veps}{1 - (\delta + \veps)}$. Since this estimate holds for all $\veps > 0$, we conclude that,
        $$
        Len(\gamma_0|_{[a, b]}) \Le \frac{\delta}{1 - \delta},
        $$
        as needed.
    \end{proof}
    
    However, this is not a sharp estimate. Here's an improved version of the previous proposition:
    
    \begin{prop}
    \label{est2}
        In the notation of Proposition \ref{est1} the following inequality holds:
        $$
        Len\lt(\gamma_0|_{[a, b]}\rt) \Le \delta
        $$
    \end{prop}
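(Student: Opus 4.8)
The plan is to upgrade Proposition \ref{est1} by the standard subdivision trick sketched in the outline: instead of applying the length estimate to the whole interval $[c,d]$ at once, I will partition $[c,d]$ into $m$ equal pieces of length $\delta/m$, estimate the length of $\gamma_0$ over each corresponding parameter sub-interval, and add up. Concretely, for a fixed positive integer $m$, set $c = c_0 < c_1 < \dots < c_m = d$ with $c_i = c + i\delta/m$. Each $[c_{i-1}, c_i]$ is a closed subinterval of $[c,d]\subset S = \im L$, so $L^{-1}([c_{i-1}, c_i])$ is a (nonempty) interval $I_i$, and the intervals $I_1, \dots, I_m$ cover $I_C^D$ up to their shared endpoints; in particular they cover $[a,b]$ up to finitely many points.

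Next I would apply Proposition \ref{est1} — or rather its proof — to each piece. The key point is that the argument of Proposition \ref{est1} never used that $[c,d]$ was the \emph{whole} image-interval: it only used that $[c,d]$ is a closed subinterval of $S$ with length $\delta$, that $I_C^D = L^{-1}([c,d])$, and that $[a,b]\subset I_C^D$. So for each $i$, writing $[a_i, b_i] = [a,b]\cap I_i$ (a closed interval, possibly degenerate, and we discard degenerate ones), Proposition \ref{est1} applied with the interval $[c_{i-1}, c_i]$ in place of $[c,d]$ gives
$$
\Len\!\lt(\gamma_0|_{[a_i, b_i]}\rt) \Le \frac{\delta/m}{1 - \delta/m}.
$$
Since the $[a_i,b_i]$ cover $[a,b]$ and overlap only at finitely many points (which contribute nothing to length), summing over the at most $m$ nondegenerate pieces yields
$$
\Len\!\lt(\gamma_0|_{[a,b]}\rt) \Le \sum_{i=1}^m \Len\!\lt(\gamma_0|_{[a_i,b_i]}\rt) \Le m\cdot\frac{\delta/m}{1 - \delta/m} = \frac{\delta}{1 - \delta/m}.
$$
This holds for every positive integer $m$, and $\frac{\delta}{1 - \delta/m}\to\delta$ as $m\to\Inf$, so letting $m\to\Inf$ gives $\Len(\gamma_0|_{[a,b]})\Le\delta$, as claimed.

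The main thing to be careful about — the only real obstacle — is the bookkeeping of the intervals $I_i$ and $[a_i,b_i]$: one must check that the $L^{-1}([c_{i-1},c_i])$ really are intervals (this is exactly the monotonicity-of-$L$ argument already used to define $I_C^D$, applied to each subinterval, noting $[c_{i-1},c_i]\subset[c,d]\subset S$), that together they exhaust $[a,b]$ (because $L$ is non-decreasing and $L([a,b])\subset[c,d] = \bigcup_i [c_{i-1},c_i]$), and that the finitely many overlap points contribute zero length (a rectifiable curve has no length concentrated at points). Everything else is a direct re-run of the already-proven Proposition \ref{est1} on smaller data, followed by a trivial limit; no new analytic input is needed beyond what the excerpt has established.
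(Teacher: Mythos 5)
Your proof is correct and follows essentially the same route as the paper: partition $[c,d]$ into $m$ equal subintervals, apply Proposition \ref{est1} to each preimage piece, sum to get $\frac{\delta}{1-\delta/m}$, and let $m\to\Inf$. One tiny remark: consecutive pieces may overlap on a whole interval (the preimage $L^{-1}(c_i)$ of a partition point need not be a single point), not just finitely many points, but this is harmless since you only need that the pieces cover $[a,b]$ so that subadditivity of length gives the upper bound.
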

    
    \begin{proof}[Proof]
        Consider a partitioning of $[c, d]$ into $m$ equal closed intervals $K_j$ of length $\frac{\delta}{m}$, where $1 \Le j \Le m$. Define $I_j = L^{-1}(K_j)\cap[a, b]$. Note, that $\{I_j\}$ does not necessarily define a partition of $[a, b]$, since a preimage of a point could be a whole interval. However, $I_j$ do cover $[a, b]$ by definition.\\
    
        Apply Proposition \ref{est1} to each of the    segments $K_j$. We have the following estimates for all $1\Le j\Le m$:
        $$
        Len(\gamma_0|_{I_j})\Le \frac{\frac{\delta}{m}}{1 - \frac{\delta}{m}},
        $$
        $$
        Len(\gamma_0|_{[a, b]})\Le\sum_{j=1}^mLen(\gamma_0|_{I_j})\Le \sum_{j=1}^m\frac{\frac{\delta}{m}}{1 - \frac{\delta}{m}} = m\cdot \frac{\frac{\delta}{m}}{1 - \frac{\delta}{m}} = \frac{\delta}{1 - \frac{\delta}{m}}.
        $$
    
        Since this estimate holds for all natural $m$, we conclude that $Len(\gamma_0|_{[a, b]})\Le\delta$.
    \end{proof}
    
    \subsection{Final step}
        Fix a connected component of $S=\im L$, an interval $J$ of length $\delta_J$, and denote by $I_J = L^{-1}(J)$ its preimage under the map $L$.
        Since the preimage of every closed segment $[c, d]\subset J$ is a closed segment $[a, b]\subset I_J$, we can apply Proposition \ref{est2} to conclude that $Len(\gamma_0|_{[a, b]})\Le\delta_J$. This estimate holds for all closed segments $[a,b]\subset I_J$, so it is also true that $Len(\gamma_0|_{I_J})\Le\delta_J$.\\
        
        The following observation gives us the global estimate on the length of $\gamma_0$. Since $[0, 1] = \bigcup\lims_{J\subset S}I_J$, we conclude:
        $$
        Len(\gamma_0) \Le \sum_{J\subset S}Len(\gamma_0|_{I_J})\Le \sum_{J\subset S}\delta_J \Le 1.
        $$
        A curve that connects points $(0, 0)$ and $(1, 0)$ has length at least $1$, with the equality only for a straight segment. Therefore, $\gamma_0$ is indeed a straight segment, and $f_n\eq0$ for all positive integers $n$, as wanted.\quad$\blacksquare$

\section{Proof of Theorem\ref{hor-ampl}}
\label{proof2}
    \subsection{Supplementary definitions}
    Most of the definitions make sense without the assumption that the curves $\gamma_n$ are graphs of functions, so we will use the same notations as in the previous sections.
    
    Let's fix an arbitrary parameter $t\in[0, 1]$ on $\gamma_0$ and use the same parameter on all the curves $\gamma_n$, as we did it earlier:
    $$
    \gamma_{n+1}(t) = \gamma_n(t) + \frac{\dot{\gamma}_n(t)}{\|\dot{\gamma}_n(t)\|} - (1, 0).
    $$
    Recall that we define the functions $x_n(t)$ and $y_n(t)$ as cartesian coordinates of the curve $\gamma_n$. In other words, 
    $$
    \gamma_n(t) = (x_n(t), y_n(t)).
    $$
    Let us introduce a new notation --- the horizontal coordinate of the leftmost point of a curve $\gamma_n$:
    $$
    l_n = \min\lims_{t\in[0, 1]}x_n(t).
    $$
    Along with the horizontal coordinate of the rightmost point of $\gamma_n$, this notation will help us analyze the horizontal amplitude $H_n$ of the curves, which is defined in Section \ref{sec4}. However, we will not need a separate notation for the horizontal coordinate of the rightmost points of the curves, since the following proposition holds:
    \begin{prop}\label{rightbounds}
        Define $r_n = \max\lims_{t\in[0, 1]}x_n(t)$. Then for all positive integers $n$ the double inequality holds:
        \begin{align}
        \label{rights}
        1 \Le r_n\Le r_0.
        \end{align}
    \end{prop}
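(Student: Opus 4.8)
The plan is to prove the two inequalities in (\ref{rights}) separately, both by elementary observations about the horizontal component $x_n(t)$. The two structural facts I will use are that every curve $\gamma_n$ lies in the space $Y$, and that the single parameter $t$ is pushed forward by $\vphi$ via $\gamma_{n+1}(t) = \gamma_n(t) + \dot{\gamma}_n(t)/\|\dot{\gamma}_n(t)\| - (1,0)$.

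For the lower bound $r_n\Ge 1$: since $\vphi$ maps $Y$ into $Y$ (the well-definedness proposition of Section \ref{hor-uni}) and $\gamma_0\in Y$, an immediate induction shows $\gamma_n\in Y$ for all $n$. In particular the right endpoint condition holds, $\gamma_n(1) = (1,0)$, so $x_n(1) = 1$. As $r_n = \max_{t\in[0,1]}x_n(t)$ is a maximum over a set containing $t=1$, we get $r_n\Ge x_n(1) = 1$. Note this only uses continuity of $x_n$ on the compact interval $[0,1]$ so that the maximum is attained; no graph hypothesis is needed, which is why the argument survives in this section.

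For the upper bound $r_n\Le r_0$: I would establish the stronger pointwise statement $x_{n+1}(t)\Le x_n(t)$ for every $t\in[0,1]$ (this is the analogue of Property $4$ of Proposition \ref{x_nprop}, but again without assuming $\gamma_n$ is a graph). Taking horizontal components of the recursion gives
$$
x_{n+1}(t) = x_n(t) + \frac{\dot{x}_n(t)}{\|\dot{\gamma}_n(t)\|} - 1 .
$$
The middle term is the first coordinate of the unit vector $\dot{\gamma}_n(t)/\|\dot{\gamma}_n(t)\|$, which is well defined since $\gamma_n$ is immersed (so $\dot{\gamma}_n(t)\ne 0$), and hence lies in $[-1,1]$. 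Therefore $x_{n+1}(t)\Le x_n(t)$. Taking the maximum over $t\in[0,1]$ yields $r_{n+1}\Le r_n$, and induction on $n$ gives $r_n\Le r_0$. Combining the two bounds gives $1\Le r_n\Le r_0$.

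There is no serious obstacle here: the only points needing a word are that the maxima are attained (compactness of $[0,1]$ together with continuity of $x_n$) and that $\dot{\gamma}_n$ never vanishes (immersion), both immediate. The real content is just the observation that membership of $\gamma_n$ in $Y$ pins the right endpoint at $(1,0)$, while the correction term $-(1,0)$ in the modified map $\vphi$ forces the horizontal coordinates to be non-increasing in $n$.
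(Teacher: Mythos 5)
Your proof is correct and follows essentially the same route as the paper: the paper phrases the pointwise monotonicity $x_{n+1}(t)\Le x_n(t)$ via the nonnegativity of $s_n$ (which is exactly your observation that the horizontal component of the unit tangent is at most $1$), and combines it with $\gamma_n(1)=(1,0)$ for the lower bound. No gaps.
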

    \begin{proof}
        The functions $s_n$, defined in Subsection \ref{subsec5.1}, make sense in general, even without the assumption that all the curves $\gamma_n$ are graphs of functions. We take the parameter $t$ on the curve $\gamma_n$ as the argument of $s_n$. Since for all $n$ and for all $t$ we have $s_n(t) \Ge 0$, no point can move to the right as we apply $\vphi$, according to Proposition \ref{s_n}. Therefore, the sequence $r_n$ is non-increasing. At the same time, $\gamma_n(1) = (1, 0)$ for all $n$. Thus, the inequality (\ref{rights}) holds.
    \end{proof}
    By the definition of $H_n = H(\gamma_n)$ we conclude that $H_n = r_n - l_n \in \lt[|l_n - r_0|, |l_n - 1|\rt]$. Hence, it is enough to analyze the asymptotic growth of the sequence $\{l_n\}$, since $\{r_n\}$ is a bounded sequence.

    \subsection{Vertical Tangencies}
    It follows from Theorem \ref{graphs}, that there exists a natural number $N$, such that $\gamma_N$ is not a graph of a function $y = f(x)$. Since it is still a smooth curve connecting the points $(0, 0)$ and $(1, 0)$, we conclude that there exists a value of the parameter $t_v$ such that $\dot{\gamma}_N(t_v)$ is vertical.\\

    For all $n > N$ fix a parameter value $t_n\in[0, 1]$, such that $x_n(t_n) = l_n$. As the following proposition claims, all the leftmost points have vertical tangencies.

    \begin{prop}
        In above notations for all $n> N$ the vector $\dot{\gamma}_n(t_n)$ is vertical.
    \end{prop}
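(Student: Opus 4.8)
\noindent\emph{Proof proposal.} The plan is to deduce the statement from the single inequality $l_n<0$, which I claim holds for every $n>N$. Granting this for a moment: since $\gamma_n(0)=(0,0)$ and $\gamma_n(1)=(1,0)$ we have $x_n(0)=0$ and $x_n(1)=1$, so if $l_n<0$ the minimum of $x_n$ on $[0,1]$ is attained neither at $0$ nor at $1$. Hence every $t_n$ with $x_n(t_n)=l_n$ lies in the open interval $(0,1)$ and is an interior minimum of the smooth function $x_n$, which forces $\dot x_n(t_n)=0$. As $\gamma_n$ is immersed, $\dot\gamma_n(t_n)\ne(0,0)$, so $\dot\gamma_n(t_n)=\bigl(0,\dot y_n(t_n)\bigr)$ with $\dot y_n(t_n)\ne0$ is a vertical vector, as claimed.

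It then remains to prove $l_n<0$ for all $n>N$. First I would record that the sequence $(l_n)$ is non-increasing: as noted in the proof of Proposition \ref{rightbounds}, $s_n(t)\Ge0$ gives $x_{n+1}(t)=x_n(t)-s_n(t)\Le x_n(t)$ for every $t$, and hence $l_{n+1}=\min_t x_{n+1}(t)\Le\min_t x_n(t)=l_n$. So it suffices to establish the base case $l_{N+1}<0$, after which $l_n\Le l_{N+1}<0$ for all $n>N$.

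For the base case I would use that $\gamma_N$ is not a graph. Note first that $\dot x_N(0)>0$ and $\dot x_N(1)>0$: by the definition of $Y$ the unit tangent of $\gamma_N$ at both endpoints is $(1,0)$, and since arc length grows at rate $\|\dot\gamma_N(t)\|>0$ the parameter $t$ runs forward along the curve, so $\dot x_N(0)=\|\dot\gamma_N(0)\|>0$ and likewise at $1$. If $\dot x_N$ never vanished on $[0,1]$, the intermediate value theorem would give $\dot x_N>0$ throughout; then $x_N$ would be a strictly increasing diffeomorphism of $[0,1]$ and $\gamma_N$ the graph of $y_N\circ x_N^{-1}$, a contradiction. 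Hence $Z=\{t\in[0,1]:\dot x_N(t)=0\}$ is a nonempty closed set, and $Z\subset(0,1)$ because $\dot x_N$ is positive near the endpoints. Put $t_v=\max Z$. On $(t_v,1]$ the derivative $\dot x_N$ is nowhere zero and $\dot x_N(1)>0$, so $\dot x_N>0$ there and $x_N$ is strictly increasing on $[t_v,1]$; in particular $x_N(t_v)<x_N(1)=1$. Finally, since $\dot x_N(t_v)=0$,
$$
x_{N+1}(t_v)=x_N(t_v)+\frac{\dot x_N(t_v)}{\|\dot\gamma_N(t_v)\|}-1=x_N(t_v)-1<0,
$$
so $l_{N+1}\Le x_{N+1}(t_v)<0$, which finishes the argument.

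The main obstacle is exactly this last step: a curve in $Y$ can a priori have all of its vertical tangencies at points whose horizontal coordinate is $\Ge1$, so one cannot simply feed an arbitrary vertical-tangent point supplied by ``$\gamma_N$ is not a graph'' into the identity $x_{N+1}(t_v)=x_N(t_v)-1$ and conclude negativity. The device that circumvents this is to take $t_v$ to be the \emph{last} critical point of $x_N$ before $t=1$ and to exploit the forced horizontal, infinitely flat behaviour of $\gamma_N$ at the right endpoint, which keeps $x_N$ strictly below $1$ immediately to the left of $t=1$.
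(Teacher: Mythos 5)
Your argument is correct and follows essentially the same route as the paper: reduce the verticality claim to showing that the leftmost point has negative horizontal coordinate (hence is an interior minimum of $x_n$ on an immersed curve), and establish $l_{N+1}<0$ by locating a critical point of $x_N$ whose horizontal coordinate is strictly less than $1$, using the horizontal tangency forced at the right endpoint. Your two small streamlinings --- taking $t_v=\max Z$ rather than the paper's ad hoc minimization near $t=1$, and replacing the induction by the monotonicity $l_{n+1}\Le l_n$ --- are sound and arguably cleaner, but do not change the substance of the proof.
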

    \begin{proof}
        Here is the proof by induction starting from $N+1$.\\
        
        \textit{Base}: $\dot{\gamma}_{N+1}(t_{N+1})$ is vertical.\\
        Assume that $x_N(t_v) \Ge 1$. In particular, there might be parameters $t_r\in(0, 1)$ such that $x_N(t_r) \Ge 1$. In this case recall that $\dot{\gamma}_N(1) = (1, 0)$, which means that there exist parameters $t_l$ arbitrarily close to $1$ with $x_N(t_l) < 1$. Denote by $t_v'$ the parameter value at which $x_N(t)$ reaches minimum on a segment $[t_r, t_l]$. Since $x_N(t_l) < 1$, we conclude that $x_N(t_v') < 1$. Since $\gamma_N$ is smooth in both directions at $t_v'$, we conclude that the tangent vector to $\gamma_N(t_v')$ is vertical. Therefore, without loss of generality we can pick the value of $t_v$ such that $x_N(t_v) < 1$.\\

        \begin{figure}[hbt!]
            \label{est_vert}
            \centering
            \includegraphics[height=5cm]{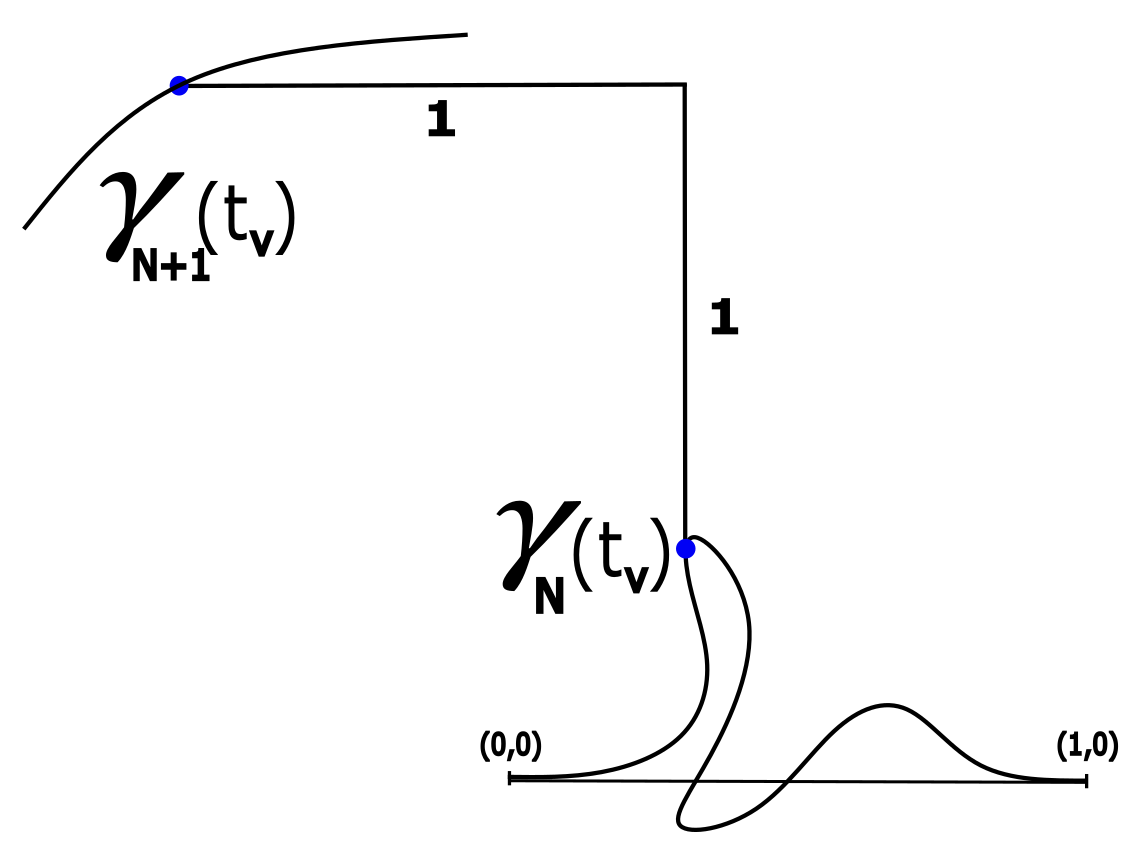}
            \caption{The image of a neighbourhood of a vertical tangency under $\vphi$.}
        \end{figure}
        
        Since $x_N(t_v) < 1$, we conclude that:
        $$
        x_{N+1}(t_v) = x_N(t_v) - 1 < 0\,\, \Rarr \,\,l_{N+1}=x_{N+1}(t_{N+1}) < 0.
        $$
        This inequality implies that $t_{N+1}\ne 0$, and therefore $\dot{\gamma}_{N+1}(t_{N+1})$ is vertical as a tangency direction of the leftmost point of a smooth curve.\\

        \textit{Step}: If $\dot{\gamma}_n(t_n)$ is vertical, then $\dot{\gamma}_{n+1}(t_{n+1})$ is vertical too.\\
        The proof is the exact copy of the proof of the Induction Base.
    \end{proof}
    
    \subsection{Estimates}
    The latter proposition has a corollary that gives us an estimate on the asymptotics of the sequence $\{l_n\}$:
    \begin{corollary}\label{lest1}
        In the above notations for all $n > N$ the inequality holds:
        $$
        l_{n + 1} \Le l_n - 1.
        $$
    \end{corollary}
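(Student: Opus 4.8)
The plan is short: evaluate the horizontal coordinate function $x_{n+1}$ at the distinguished parameter $t_n$, and then use that $l_{n+1}$, being the minimum of $x_{n+1}$, cannot exceed this particular value. Everything rests on the proposition just proved, which supplies a vertical tangency at the leftmost point.

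First I would write down the recursion for the horizontal coordinates. Taking the horizontal component of the defining relation $\gamma_{n+1}(t) = \gamma_n(t) + \frac{\dot{\gamma}_n(t)}{\|\dot{\gamma}_n(t)\|} - (1, 0)$ gives
$$
x_{n+1}(t) = x_n(t) + \frac{\dot{x}_n(t)}{\|\dot{\gamma}_n(t)\|} - 1,
$$
which is just the unparametrized form of Proposition \ref{s_n}. Next I would specialize to $t = t_n$. By the preceding proposition, for $n > N$ the vector $\dot{\gamma}_n(t_n)$ is vertical, so its horizontal component $\dot{x}_n(t_n)$ is zero; hence the middle term vanishes and
$$
x_{n+1}(t_n) = x_n(t_n) - 1 = l_n - 1.
$$

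Finally, since $l_{n+1} = \min_{t\in[0,1]} x_{n+1}(t) \Le x_{n+1}(t_n)$, we obtain $l_{n+1} \Le l_n - 1$, as claimed. There is no real obstacle left at this stage: the one substantive point — that the leftmost point of $\gamma_n$ has a vertical tangent, so that the $-(1,0)$ shift is not partially cancelled by the forward motion of the tangent vector — has already been secured by the proposition above, and the corollary is an immediate one-line consequence of it together with the coordinate recursion.
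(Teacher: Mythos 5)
Your proof is correct and is essentially identical to the paper's own argument: both use the vertical tangency at $t_n$ to get $x_{n+1}(t_n) = l_n - 1$ and then bound $l_{n+1}$ by this value. You merely spell out the vanishing of the horizontal component of the unit tangent more explicitly than the paper does.
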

    \begin{proof}
        The fact that $\gamma_n$ has a vertical tangency at $t_n$ implies that:
        $$
        x_{n+1}(t_n) = x_n(t_n) - 1 = l_n - 1 \,\,\Rarr\,\, l_{n+1} = x_{n+1}(t_{n+1}) \Le x_{n+1}(t_n) = l_n - 1,
        $$
        as needed.
    \end{proof} 

    On the other hand, the following observation follows from the general properties of the mapping $\vphi$:
    \begin{prop}\label{lest2}
        For all $n > N$ we have $l_n - l_{n+1} \Le 2$.
    \end{prop}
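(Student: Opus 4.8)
The plan is to observe that a single application of $\vphi$ cannot push any point of $\gamma_n$ more than $2$ units to the left, and then to pass to the minimum over the parameter. From formula (\ref{formula}), the horizontal coordinate transforms as
$$
x_{n+1}(t) = x_n(t) + \frac{\dot{x}_n(t)}{\|\dot{\gamma}_n(t)\|} - 1,
$$
which is exactly the relation $x_{n+1}(t) = x_n(t) - s_n(t)$ with $s_n(t) = 1 - \frac{\dot{x}_n(t)}{\|\dot{\gamma}_n(t)\|}$, read in the general (not-necessarily-graph) setting, just as in the proof of Proposition \ref{rightbounds}.

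First I would note that $\frac{\dot{x}_n(t)}{\|\dot{\gamma}_n(t)\|}$ is the first component of a unit vector, hence lies in $[-1,1]$; equivalently $0 \Le s_n(t) \Le 2$ for every $t\in[0,1]$. Therefore $x_{n+1}(t) \Ge x_n(t) - 2$ pointwise. Then, choosing $t_{n+1}$ with $x_{n+1}(t_{n+1}) = l_{n+1}$, I would chain
$$
l_{n+1} = x_{n+1}(t_{n+1}) \Ge x_n(t_{n+1}) - 2 \Ge \min\lims_{t\in[0,1]} x_n(t) - 2 = l_n - 2,
$$
which rearranges to $l_n - l_{n+1} \Le 2$, as claimed. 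Note that the hypothesis $n > N$ is not actually used in this direction; it is only needed for the matching lower bound $l_{n+1}\Le l_n - 1$ of Corollary \ref{lest1}.

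There is essentially no obstacle here: the entire content is the elementary fact that the horizontal projection of a unit tangent vector has absolute value at most $1$, which yields $s_n \Le 2$. The estimate is sharp precisely when $\gamma_n$ has a tangent pointing straight to the left at its leftmost point, which is why the constant is $2$ rather than $1$; combined with Corollary \ref{lest1}, this pins down the growth of $l_n$, and hence of $H_n$, as linear with rate between $1$ and $2$.
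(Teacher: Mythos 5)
Your proof is correct and follows essentially the same route as the paper: the bound $0 \Le s_n \Le 2$ coming from the horizontal component of a unit vector, followed by passing to the minimum over the parameter. You simply spell out the final minimization step, which the paper leaves implicit.
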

    \begin{proof}
        Indeed, by definition $s_n(t) < 2$ for all $n$ and for all $t$, so no point can travel further than $2$ to the left under the action of $\vphi$.
    \end{proof}
    Combine the results of the Corollary \ref{lest1} and Proposition \ref{lest2}, and conclude that:
    \begin{align}\label{lbounds}
        1\Le l_n - l_{n+1} \Le 2
    \end{align}
    The following chain of inequalities follows from the formula (\ref{lbounds}):
    $$
    l_N - k \Ge l_{N+k} \Ge l_N - 2k;
    $$
    $$
    l_N - (N+k - N) \Ge l_{N+k} \Ge l_N - (2N + 2k - 2N);
    $$
    $$
    l_N - (n - N) \Ge l_n \Ge l_N - (2n - 2N);
    $$
    $$
    r_n - l_N + (n - N) \Le r_n - l_n \Le r_n - l_N + (2n - 2N);
    $$
    $$
    n - (N + l_N - r_n) \Le H_n \Le 2n - (2N + l_N - r_n);
    $$
    $$
    n - (N + l_N - 1) \Le H_n \Le 2n - (2N + l_N - r_0).
    $$
    The transition to the last inequality follows from the bounds on $r_n$ we obtained in Proposition \ref{rightbounds}.\\

    Finally, we define $c_1 = N + l_N - 1$ and $c_2 = 2N + l_N - r_0$, and conclude:
    $$
    n - c_1 \Le H_n \Le 2n - c_2,
    $$
    as needed. \quad $\blacksquare$

\vskip 2cm

\section*{Acknowledgements}

I am grateful to my advisor Prof. Sergei Tabachnikov for his guidance, fruitful discussions, and for the useful corrections and remarks on the paper.

\vskip 2cm

\end{document}